\providecommand\@dotsep{5}
\newcommand{\prank}{\ensuremath{\mathrm{rk}_p}}
\newcommand{\coprank}{\ensuremath{\mathrm{cork}_p}}
\renewcommand{\sp}{\mathrm{Sp}}
\newcommand{\Spc}{\mathrm{Spc}}
\begin{document}
	\title[The Balmer spectrum of genuine $A$-spectra]{The Balmer spectrum of the equivariant homotopy category of a finite abelian group\vspace{-1ex}}
	\author[Barthel]{Tobias Barthel}
	\address{University of Copenhagen\\ Copenhagen, Denmark} 
	\email{tbarthel@math.ku.dk}
	
	\author[Hausmann]{Markus Hausmann}
	\address{University of Copenhagen\\ Copenhagen, Denmark} 
	\email{hausmann@math.ku.dk}

	\author[Naumann]{Niko Naumann}
	\address{University of Regensburg\\
	NWF I - Mathematik; Regensburg, Germany}
	\email{Niko.Naumann@mathematik.uni-regensburg.de}
	\urladdr{http://homepages.uni-regensburg.de/~nan25776/}

	\author[Nikolaus]{Thomas Nikolaus}
	\address{Max Planck Institute for Mathematics}
	\email{thoni@mpim-bonn.mpg.de}

	\author[Noel]{Justin Noel}
	\address{University of Regensburg\\
	NWF I - Mathematik; Regensburg, Germany}
	\email{justin.noel@mathematik.uni-regensburg.de}
	\urladdr{http://nullplug.org}

	\author[Stapleton]{Nathaniel Stapleton\vspace{-3ex}}
	\address{University of Regensburg\\ NWF I - Mathematik; Regensburg, Germany} 
	\email{nat.j.stapleton@gmail.com}
	\thanks{Justin Noel was partially supported by the DFG grants: NO 1175/1-1 and SFB 1085 - Higher Invariants, Regensburg. Niko Naumann and Nathaniel Stapleton were also partially supported by the SFB 1085 - Higher Invariants, Regensburg. Tobias Barthel and Markus Hausmann were supported by the DNRF92.}


\begin{abstract}
	For a finite abelian group $A$, we determine the Balmer spectrum of $\sp_A^{\omega}$, the compact objects in genuine $A$-spectra. This generalizes the case $A=\mathbb{Z}/p\mathbb{Z}$ due to Balmer and Sanders \cite{Balmer-Sanders}, by establishing (a corrected version of) their log$_p$-conjecture for abelian groups. We also work out the consequences for the chromatic type of fixed-points and establish a generalization of Kuhn's blue-shift theorem for Tate-constructions \cite{kuhn}.\vspace{-8ex}
\end{abstract}

	\maketitle

\tableofcontents

\section{Introduction}
\subsection*{Extended Abstract}

In \cite{balmer-the-spectrum}, Balmer constructs a topological space $\Spc(\cT)$, called the Balmer spectrum of $\cT$, for any essentially small triangulated category $\cT$ equipped with a compatible symmetric monoidal structure. This theory unifies the general reconstruction theorems for quasi-compact quasi-separated schemes (e.g.~\cite{thomason,balmer_reconstruction}) and the notion of support varieties appearing in modular representation theory (e.g.~\cite{bcr_modular, bik_modular}). Moreover, the primordial examples, namely the thick subcategory theorem of Hopkins and Smith~\cite{Hopkins-Smith} for spectra and the analogous result for the derived category of a Noetherian commutative ring by Hopkins and Neeman~\cite{Hop87,Nee92}, fit naturally into this framework. In each case, the complete description of $\Spc(\cT)$  was a major breakthrough in the respective field, as the space $\Spc(\cT)$ captures the global structure of $\cT$. 

This paper is concerned with the Balmer spectrum of (the homotopy category of) $\sp_G^\omega$ of compact genuine $G$-spectra \cite{LMS86} for a finite group $G$. This category blends topological information such as the stable homotopy groups of spheres with group-theoretic information such as Burnside rings and group cohomology. It has been much studied in recent years, especially after its crucial role in the solution of the Kervaire invariant one problem~\cite{hhr_kervaire}. Building on unpublished work of Strickland and Joachimi~\cite{stroilimi}, Balmer and Sanders \cite{Balmer-Sanders} determine the underlying \emph{set} of $\Spc(\sp_G^\omega)$  and show that the \emph{topology} of $\Spc(\sp_G^\omega)$ is closely related to the blue-shift phenomenon in generalized Tate cohomology discovered by Greenlees, Hovey, and Sadofsky~\cite{Greenlees-Sadofsky, Hovey-Sadofsky}. More precisely, they show that determining the topology on $\Spc(\sp_G^\omega)$ is equivalent to computing the blue-shift numbers $\beth_n(G;-,-)$ (defined below in \cref{def:blue-shift-numbers}) of $G$, which broadly speaking measure how equivariant homotopy theory interacts with chromatic homotopy theory. They are able to deduce these numbers for groups of square-free order from the seminal blue-shift result of Kuhn~\cite{kuhn}, and propose a conjecture for the general case. Here, we resolve this conjecture for all finite {\em abelian} groups $A$, thereby giving a complete description of $\Spc(\sp_A^\omega)$, and we deduce geometric consequences for finite complexes with $A$-action (see Theorem \ref{thm:main}). In particular, this establishes a far-reaching generalization and geometric interpretation of Kuhn's theorem. 

\subsection*{Detailed description of the results}


We now go through the results of this paper in more detail and recall the key definitions along the way. Let $(\cT, \otimes, \bf{1})$ be an essentially small $\otimes$-triangulated category \cite{verdier, neeman}. The points of the Balmer spectrum $\Spc(\cT)$ of $\cT$ are the prime thick $\otimes$-ideals, i.e., those proper thick $\otimes$-ideals $\cI\subsetneq\cT$ such that if $a\otimes b\in \cI$, then $a\in \cI$ or $b\in \cI$. A basis for the open subsets of $\Spc(\cT)$ is given by the complements of subsets of the form $\mathrm{Supp}(a)=\{\cP\in \Spc(\cT)\, \mid \, a\not\in \cP\}$, for some $a\in \cT$. When $\cT$ is rigid\footnote{The consequence of this technical assumption is that every thick $\otimes$-ideal of $\cT$ is radical, see \cite[Rem.~4.3 and Prop.~4.4]{balmer-the-spectrum}.}, and 
it will be in all examples below, then Balmer's classification theorem \cite[Introduction]{balmer-the-spectrum} identifies the collection of thick $\otimes$-ideals of $\cT$ with the Thomason subsets of $\Spc(\cT)$, i.e., subsets given by unions of closed subsets with quasi-compact complements. 
This shows in particular that the underlying set of $\Spc(\cT)$ determines only the prime thick $\otimes$-ideals, while the topology of $\Spc(\cT)$ is required to classify all thick $\otimes$-ideals of~$\cT$.

In \cite{Balmer-Sanders}, Balmer and Sanders study, for a finite group $G$, the Balmer spectrum $\Spc(\sp_G^\omega)$  of the homotopy category $\sp_G^\omega$ of compact genuine $G$-spectra \cite{LMS86}. 
We recommend the introduction of \cite{Balmer-Sanders} for a thorough overview of this problem. The results depend on the thick subcategory theorem of Hopkins and Smith \cite{Hopkins-Smith,ravenel-orange}, which we will now recall (see \cite[Sec.~9]{spectra3} for more details)\footnote{Note that when the tensor unit $\bf{1}\in\cT$ generates $\cT$ as a thick subcategory, then every thick subcategory of $\cT$ is also a thick $\otimes$-ideal. So there is no distinction between thick $\otimes$-ideals and thick subcategories of $\sp^\omega$.}.
For each prime $p\in \bZ$ and integer $n\ge 1$, there is a prime thick $\otimes$-ideal in the $\infty$-category $\sp^\omega$ of finite spectra
\[ C^n_p:=\{ X\in \sp^\omega \,\mid\, K(n-1)_*X=0 \},\]
and these constitute a descending chain
\[C^1_p\supseteq\ldots\supseteq C^n_p\supseteq\ldots\supseteq C_p^\infty:=\bigcap_{n\ge 0} C^n_p=\{ X\in \sp^\omega\,\mid\, K(\infty)_*(X)=0\} .\]
Here, $K(n)$ denotes the $n$th Morava $K$-theory at the prime $p$ with the usual
conventions that $K(0)=H\mathbb{Q}$ (independently of $p$) and $K(\infty)=H\mathbb{F}_p$. Finally, $\Spc(\sp^{\omega})$ is obtained by taking the union over all $p$ of these sets of prime ideals and noting that, independently of $p$, each of the prime ideals $C_p^1$ equals the single prime consisting of torsion finite spectra.

Now we return to $\sp_G^\omega$. For each subgroup $H$ of $G$ we have an exact symmetric monoidal geometric fixed point functor \[ \Phi^{H}\colon \sp_{G}^\omega\longrightarrow \sp^\omega \] which induces a continuous map of Balmer spectra ${\Phi^{H}}^*\colon \Spc(\sp^\omega)\to \Spc(\sp_G^\omega)$.  Balmer and Sanders show that these maps are jointly surjective and that ${\Phi^{H_1}}^*(\mathfrak{p})={\Phi^{H_2}}^*(\mathfrak{q})$ if and only if $H_1$ is conjugate to $H_2$ and $\mathfrak{p}=\mathfrak{q}$ in $\Spc(\sp^\omega)$ \cite[Thm.~4.9 and Thm.~4.11]{Balmer-Sanders}. This determines $\Spc(\sp_G^\omega)$ as a set. To complete the identification of the {\em topological space} $\Spc(\sp_G^\omega)$, and hence obtain the classification of the thick $\otimes$-ideals, one needs to further identify all inclusions between the prime ideals
\[ \Spc(\sp_G^\omega) =  \{ {\mathcal P}(H,q,n):=(\Phi^H)^{-1}(C^n_q) \mid H\subseteq G, 1\leq n\leq \infty, q \textrm{ prime}\} . \footnote{The fact that determining these inclusions is {\em equivalent} to knowing the topology follows from the second sentence of \cite[Cor.~8.19]{Balmer-Sanders}. }
\] 
Balmer and Sanders reduce this problem to the special case that $G$ is a $p$-group, for some prime $p$, and $q=p$ \cite[Prop.~6.11]{Balmer-Sanders}. They also give an important result in this case \cite[Prop.~8.1, Cor.~8.4]{Balmer-Sanders}: Suppose that $G$ is a $p$-group, $K\subseteq H$ are subgroups of $G$, and denote $s:=\log_p(|H/K|)$. Then, for each $n\ge 1$, \[{\mathcal P}(K,p,n+s) \subseteq {\mathcal P}(H,p,n).\]
After the other reductions of Balmer and Sanders, the only remaining question is as follows, cf.~\cite[Rem.~8.6]{Balmer-Sanders}.

\begin{question}\label{quest} 
Let $K\subseteq H$ be subgroups of a $p$-group $G$ and $1\leq n<\infty$. What is the minimal $0\leq i\leq \log_p(|H/K|)$ such that ${\mathcal P}(K,p,n+i)\subseteq {\mathcal P}(H,p,n)$?
\end{question}

To underline their fundamental importance, we give these numerical invariants of finite $p$-groups a proper name.

\begin{definition}\label{def:blue-shift-numbers} 
In the situation of \Cref{quest},
let $\beth_n(G;H,K):=i$ and call it {\em the $n$th blue-shift number of G with respect to $K\subseteq H$}.
\end{definition}

The term blue-shift here is motivated by the fact that 
Balmer and Sanders link \Cref{quest}, which asks how the chromatic type of the geometric fixed points of a finite $G$-spectrum can vary, to a blue-shift phenomenon for generalized Tate cohomology, cf. \cite[Section 9]{Balmer-Sanders}. 

In summary, computing, for a $p$-group $G$, all of the blue-shift numbers $\beth_n(G;H,K)$ is equivalent to identifying the topological space $\Spc(\sp_G^{\omega})$. Moreover, the identification of the Balmer spectrum for general finite groups reduces to the case of $p$-groups. Balmer and Sanders conjecture \cite[$\log_p$-Conjecture 8.7]{Balmer-Sanders} that their bound is optimal, namely that $\beth_n(G;H,K) = \log_p(| H/K |)$. In \Cref{cor:include} we will identify $\beth_n(G;H,K)$ precisely when $G=A$ is an {\em abelian} $p$-group. Our identification agrees with the Balmer--Sanders conjecture when $A$ is an elementary abelian $p$-group, and disagrees otherwise. We then provide the necessary correction to complete the identification of $\Spc(\sp_A^\omega)$ for all abelian groups~$A$.

To make \Cref{quest} more concrete, recall that Hopkins and Smith define for a finite $p$-local spectrum $Y\in C_p^0:=\sp^\omega_{(p)}$, $\mathrm{type}(Y):=\max\{ n\,\mid\, Y\in C^n_p\}\in [0,\ldots,\infty]$ to be {\em the type of $Y$}; here we abuse notation and let $C^n_p$ also denote the $p$-local analogues of the subcategories discussed above. Using the reduction to the $p$-local case and unwinding the definitions, one sees that \Cref{quest} is concerned with determining how the type of $\Phi^H(X)$ varies with respect to a choice of subgroup $H\subseteq G$, for a finite $p$-local $G$-spectrum $X$; see \Cref{rem:akhil} for a simple example along these lines. 
\tabularnewline

We now fix a prime $p$, a finite abelian group $A$ (not necessarily a $p$-group), and we let $\Sigma$ denote the set of subgroups of $A$. To every $X\in \sp_{A,(p)}^\omega$ we associate the function
$f_X\colon \Sigma\longrightarrow [0,1,\ldots,\infty]$, defined by $f_X(A'):=\mathrm{type}(\Phi^{A'}(X))$. This function encodes which prime thick $\otimes$-ideals of $\sp_{A,(p)}^\omega$ the spectrum $X$ belongs to and, as we vary $X$, the inclusions among all prime thick $\otimes$-ideals. 
Finally, let us denote by $\prank(B):=\dim_{\mathbb{F}_p} (B\otimes_{\mathbb{Z}}\mathbb{F}_p)$ the {\em $p$-rank} of a finite abelian group $B$.
\begin{thm}\label{thm:main}
	For a function $f\colon\Sigma\to [0,1,\ldots,\infty]$, the following are equivalent:
	\begin{enumerate}[label = \roman*)]
		\item \label{it:main-fn} There is some $X\in\sp_{A,(p)}^\omega$ such that $f=f_X$.
		\item \label{it:main-chain} For every chain of subgroups $A'\subseteq A''\subseteq A$ such that $A''/A'$ is a $p$-group, we have \[f(A') \leq f(A'')	+\prank(A''/A'). \]
	\end{enumerate}
\end{thm}

This answers \Cref{quest} for abelian $p$-groups, as follows.

\begin{corollary}\label{cor:include}
When $p$ is a prime, $A$ is an abelian $p$-group, and $K\subseteq H\subseteq A$ are subgroups, then $\beth_n(A;H,K)=\prank(H/K)$ for all $1\leq n <\infty$.
\end{corollary}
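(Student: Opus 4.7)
The plan is to deduce both inequalities $\beth_n(A;H,K) \le \prank(H/K)$ and $\beth_n(A;H,K) \ge \prank(H/K)$ from \Cref{thm:main}, after invoking the Balmer--Sanders reduction to $p$-local $A$-spectra \cite[Prop.~6.11]{Balmer-Sanders}. Unwinding definitions, the inclusion $\mathcal{P}(K,p,n+i) \subseteq \mathcal{P}(H,p,n)$ translates into the implication ``$f_X(H) \le n-1$ implies $f_X(K) \le n+i-1$'' for every $X \in \sp_{A,(p)}^\omega$. The upper bound is then immediate from the easy direction (i)$\Rightarrow$(ii) of \Cref{thm:main}: applied to the chain $K \subseteq H \subseteq A$ (whose quotient is a $p$-group because $A$ is), it forces $f_X(K) \le f_X(H) + \prank(H/K)$ for every $X$, so the choice $i = \prank(H/K)$ suffices.

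For the matching lower bound it is enough to produce a single $X \in \sp_{A,(p)}^\omega$ with $f_X(H) = n-1$ and $f_X(K) = n-1 + \prank(H/K)$, since such an $X$ simultaneously obstructs $\mathcal{P}(K,p,n+i) \subseteq \mathcal{P}(H,p,n)$ for every $i < \prank(H/K)$. By the nontrivial direction (ii)$\Rightarrow$(i) of \Cref{thm:main}, this reduces to exhibiting a function $f\colon\Sigma \to \{0,1,\ldots,\infty\}$ with those prescribed values at $H$ and $K$ and satisfying the chain condition. I would take
\[
f(A') := (n-1) + \prank\bigl(H/(H \cap A')\bigr),
\]
so that $f(H)=n-1$ and $f(K) = n-1 + \prank(H/K)$ as desired. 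To verify the chain condition for a pair $A' \subseteq A''$, combine the short exact sequence
\[
0 \to (H \cap A'')/(H \cap A') \to H/(H \cap A') \to H/(H \cap A'') \to 0
\]
with the injection $(H \cap A'')/(H \cap A') \hookrightarrow A''/A'$ induced by $H \cap A'' \hookrightarrow A''$; subadditivity of $\prank$ on short exact sequences of finite abelian $p$-groups then yields
\[
\prank\bigl(H/(H \cap A')\bigr) \le \prank\bigl(H/(H \cap A'')\bigr) + \prank(A''/A'),
\]
which is precisely the required inequality $f(A') \le f(A'') + \prank(A''/A')$.

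Once \Cref{thm:main} is available, the corollary is essentially bookkeeping; the only content lies in pinning down an explicit $f$ that both maximizes the gap $f(K) - f(H)$ and satisfies the chain condition globally, not merely on the single pair $K \subseteq H$. Consequently, the main obstacle sits entirely upstream, in the realization direction (ii)$\Rightarrow$(i) of \Cref{thm:main}, which produces $A$-spectra with prescribed geometric fixed-point types.
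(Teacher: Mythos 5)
Your proof is correct and follows the route the paper implicitly suggests by placing the corollary immediately after \Cref{thm:main}; since the paper does not spell out the deduction, your explicit function $f(A') = (n-1) + \prank\bigl(H/(H\cap A')\bigr)$ and the verification of its chain condition via subadditivity of $\prank$ along short exact sequences is exactly the missing bookkeeping. It is worth noting that the corollary drops out even more directly from \Cref{thm:topology}: that theorem states ${\mathcal P}(K,n+i) \subseteq {\mathcal P}(H,n)$ holds if and only if $K \subseteq H$, $H/K$ is a $p$-group, and $n+i \ge n + \prank(H/K)$, so the minimal such $i$ is $\prank(H/K)$ with no auxiliary construction needed. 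Your detour through \Cref{thm:main} is a valid and somewhat more explicit route, trading the packaged inclusion criterion of \Cref{thm:topology} for a hands-on realization argument that produces the witnessing spectrum $X$ directly.
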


We note that as an immediate consequence of \cref{cor:include}, the blue-shift numbers $\beth_n(A;H,K)$ are independent of $n$. In case $A=\mathbb{Z}/p\mathbb{Z}$, \Cref{thm:main} is due to Balmer and Sanders \cite[Sec.~7]{Balmer-Sanders}.

To prove it, they make use of Kuhn's seminal blue-shift theorem for Tate cohomology \cite[Prop.~1.11]{kuhn}. We will prove a generalization of Kuhn's theorem, which also is of interest in its own right.  
We must first fix some notation. For a spectrum $X\in\sp$, let $\varphi^A(X):=\Phi^A(\underline{X})\in\sp$ be the $A$-geometric fixed points of the Borel $A$-equivariant spectrum $\underline{X}$ associated with $X$\footnote{For comparison with \cite{kuhn}, observe that $\varphi^A(X)=\Phi^A(\tilde{E}A\wedge F(EA_+, i_*X))$ are the {\em geometric} fixed points of the Tate-construction of the $A$-spectrum $i_*X$, where $i_* X$ is the inflation of $X$.}. Moreover, let $L_{n-1}^f$ denote the Bousfield localization functor on $\sp^\omega$ with kernel $C^{n}_p$. 
\begin{thm}\label{thm:kuhnplus}
	For every abelian $p$-group $A$ and integer $n\ge 1$ we have
	\[ \{ X\in C^0_p\,\mid\,\Phi^A(L_{n-1}^f(S^0))\otimes X=0\}=C_p^{\max( n-\prank(A), 0 ) } .\]

\end{thm}


\subsection*{Outline of the proof of \cref{thm:main}}
We first show that for abelian groups $A$, the blue-shift numbers $\beth_n(A;A',A'')$
are determined by the absolute ones $\beth_n(A):=\beth_n(A;A,e)$ (\cref{lem:going-up} and \cref{lem:inflate}).
In terms of the $\beth_n(A)$, the proof of \cref{thm:main} has two major ingredients which establish a lower and an upper bound, respectively. 

The first ingredient is a far-reaching generalization of the blue-shift theorem for complex oriented cohomology theories due to Greenlees, Hovey, and Sadofsky~\cite{Greenlees-Sadofsky, Hovey-Sadofsky}. Indeed, the natural map from $L_{n-1}^fS^0$ to a Morava $E$-theory spectrum $E$ of height $n-1$ allows us to obtain an upper bound for $\beth_n(A)$ by considering $\varphi^A(E)$. The key idea is then to recognize $\varphi^{{A}}(E)$ as suitable sections of the structure sheaf on a certain non-connective derived scheme. This makes it possible to use derived algebraic geometry and the geometry of the stack of formal groups to completely describe the height shifting behaviour of $\varphi^{{A}}(E)$. 

As a second ingredient, we make use of equivariant finite complexes $F(n)$ built from partition complexes that feature prominently in the Goodwillie calculus of functors. In particular, we rely on the study of their chromatic behaviour in~\cite{ArM99, Aro98} and on that of their fixed points in~\cite{ArL17}. These results imply that the complexes $F(n)$ realize the stipulated blue-shift, thereby establishing our lower bound for $\beth_n(A)$.

Finally, we show that our upper and lower bounds for $\beth_n(A)$ coincide and deduce the description of $\Spc(\sp_A^\omega)$. 
Since for non-abelian groups $G$ one has $\varphi^G(E)=0$ (\cite[Prop.~5.26]{MNN}), and there are no suitable non-abelian generalizations of the $F(n)$ known, we believe that the determination of the Balmer spectrum for any interesting class of finite non-abelian groups requires substantial new ideas. In fact, we are not even able to see that for a general finite
group $G$, the number $\beth_n(G)$ is independent of $n$.

\subsection*{Organization of the paper} This paper is organized as follows: In \Cref{sec:proofs} we prove \cref{thm:main} and \cref{thm:kuhnplus}, postponing the proofs of two key technical results to the following two sections. The first technical result is \Cref{nthm:heightdrop} in \Cref{subset:blue-on-lubin-tate} which determines the blue-shift of generalized Tate-constructions on Lubin--Tate spectra.
It will be used to establish the implication \ref{it:main-fn} $\Rightarrow$ \ref{it:main-chain} in \Cref{thm:main}. The second one is \Cref{prop:complexes-restate} in \Cref{subsec:arone-lesh}, and is a guide through 
previous work of Arone, Lesh, Dwyer, and Mahowald which provides examples of finite $A$-complexes with very subtle properties. This is the key result needed to show the implication \ref{it:main-chain} $\Rightarrow$ \ref{it:main-fn} in \Cref{thm:main}.

\subsection*{Acknowledgments}
Markus Hausmann thanks Gregory Arone for helpful conversations on the subject of this paper. Niko Naumann thanks Neil Strickland for making unpublished notes of his available. We thank Paul Balmer, Beren Sanders, and the anonymous referee for pointing out inaccuracies in a preliminary draft of this paper. This work first began while Tobias Barthel, Thomas Nikolaus, and Nathaniel Stapleton were in Bonn and they thank the MPIM for its hospitality.

\section{Proofs of the main results}\label{sec:proofs}

We first state two key technical results, the proofs of which are postponed to
\Cref{subset:blue-on-lubin-tate} and \Cref{subsec:arone-lesh}, respectively.

\begin{thm}\label{cor:bound-height-drop}
	Assume $p$ is a prime, $A$ is a finite abelian $p$-group and $X\in\sp_{A,(p)}^\omega$. Then type$(\Phi^A(X))\ge\mathrm{type}(\Phi^{\{0\}}(X))-\prank(A)$. 
\end{thm}

\begin{thm}\label{prop:complexes}(Arone, Dwyer, Lesh, Mahowald)\\
Let $p$ be a prime, $n\ge 1$ and $\Delta:=(\mathbb{Z}/p\mathbb{Z})^{\times n}$ the corresponding elementary abelian $p$-group. Then, there is a $p$-local finite $\Delta$-equivariant spectrum $F(n)\in\sp_{\Delta,(p)}^\omega$ satisfying the following conditions:
	\begin{enumerate}[label = \roman*)]
		\item\label{it:adlm-0} The geometric fixed points $\Phi^{\Delta}(F(n))$ have type $0$.
		\item\label{it:adlm-n} The underlying non-equivariant spectrum of $F(n)$, i.e., $\Phi^{\left\{ 0\right\}}(F(n))$, has type $n$.
	\end{enumerate}
\end{thm}

Fix a finite abelian group $A$ and a prime $p$ (where $A$ is not required to be a $p$-group). Now we turn to determining the Balmer spectrum of $\sp_{A,(p)}^\omega$, the localization at $p$ of $\sp^\omega$. For every subgroup $A'\subseteq A$ and 
$1\leq n\leq\infty$, we have a prime ideal
\[ {\mathcal P}(A',n):= {\mathcal P}(A',p,n):={\Phi^{(A')}}^{-1}(C_p^n)= \{ X\in \sp_{A,(p)}^\omega\,\mid\, \mathrm{type}(\Phi^{A'}(X))\ge n\}\in \mathrm{Spc}(\sp_{A,(p)}^\omega)\footnote{Since we are now working $p$-locally, we will omit the second entry in the notation ${\mathcal P}(A',q,n)$ from the introduction, since $q$ will always be $p$.}. \]
Writing $\Sigma$ for the set of subgroups of $A$, even more is true: The map
\[ \Sigma\times [1,\ldots, \infty] \longrightarrow \mathrm{Spc}(\sp_{A,(p)}^\omega)\, , \, (A',n)\mapsto {\mathcal P}(A',n) \]
is bijective \cite[Thms.~4.9 and 4.14]{Balmer-Sanders}. Determining the topology on $\mathrm{Spc}(\sp_{A,(p)}^\omega)$ is more subtle, and is equivalent to deciding for which pairs $(A',n),(A'',m)\in \Sigma\times [1,\ldots, \infty]$ we have an inclusion ${\mathcal P}(A',n)\subseteq {\mathcal P}(A'',m)$, cf.~\cite[Cor.~8.19]{Balmer-Sanders}.

Our result is as follows.

\begin{thm} \label{thm:topology}
	Given subgroups $A',A''\subseteq A$ and $1\leq n,m\leq \infty$, the following are equivalent:
	\begin{enumerate}[label = \roman*)]
		\item \label{it:top-contain} We have ${\mathcal P}(A',n)\subseteq {\mathcal P}(A'',m)$.
		\item \label{it:top-prank} We have $A'\subseteq A''$, the quotient $A''/A'$ is a $p$-group, and $n\ge m+\prank(A''/A')$.
	\end{enumerate}
\end{thm}
We will now explain how to use results of \cite{Balmer-Sanders} to reduce the proof of \Cref{thm:topology} to the special case that $A'\subseteq A''$ equals
$\{ 0 \} \subseteq A$:
By \cite[Prop.~6.9]{Balmer-Sanders}, the inclusion in \Cref{thm:topology}, \ref{it:top-contain} is possible only 
if $A'\subseteq A''$ and $A''/A'$ is a (possibly trivial) $p$-group, so we assume this from now on. In the following, we will write ${\mathcal P}_A(?,?) = \mathcal{P}(?,?)$ when it seems necessary to identify that the ambient group is $A$. We can then state the first reduction step:

\begin{prop}\label{lem:going-up}
	For subgroups $A'\subseteq A''\subseteq A$ and $1\leq m,n\leq\infty$, the following are equivalent:
	\begin{enumerate}[label = \roman*)]
		\item\label{it:gup-ambient} We have ${\mathcal P}_A(A',n)\subseteq {\mathcal P}_A(A'',m)$.
		\item\label{it:gup-subgp} We have ${\mathcal P}_{A''}(A',n)\subseteq {\mathcal P}_{A''}(A'',m)$.
	\end{enumerate}
\end{prop}

\begin{proof}
	Restriction induces a continuous map of spectra \[ \mathrm{Res} \colon  \mathrm{Spc}(\sp_{A'',(p)}^\omega)\longrightarrow \mathrm{Spc}(\sp_{A,(p)}^\omega) \]
	which, for all subgroups $B\subseteq A''$ and $1\leq k\leq\infty$, satisfies $\mathrm{Res}({\mathcal P}_{A''}(B,k))= 
	{\mathcal P}_A(B,k)$. So \ref{it:gup-subgp} implies \ref{it:gup-ambient}.

	The reverse implication is more subtle and uses the observation \cite[Sec.~2.1, (F)]{Balmer-Sanders} that Res
	satisfies a Going-Up theorem. Following the notation there, choose ${\mathcal P}':={\mathcal P}_{A}(A',n)$
	and ${\mathcal Q}:={\mathcal P}_{A''}(A'',m)$. We then have Res$({\mathcal Q})={\mathcal P}_{A}(A'',m)
	\supseteq {\mathcal P}'$, i.e., ${\mathcal P}'\in\overline{\{ \mathrm{Res}({\mathcal Q}) \}}$, and the Going-Up theorem
	implies that there is some ${\mathcal Q}'\in\overline{\{ {\mathcal Q} \}}$ such that Res$({\mathcal Q}')={\mathcal P}'$.
	Since for subgroups of abelian groups, Res is injective \cite[Cor.~4.4 and Thm.~4.14]{Balmer-Sanders} and we have Res$({\mathcal P}_{A''}(A',n))={\mathcal P}_{A}(A',n)={\mathcal P}'$, we conclude that ${\mathcal Q}'={\mathcal P}_{A''}(A',n)$, and the relation
	${\mathcal Q}'\in\overline{\{ {\mathcal Q} \}}$ then means that \ref{it:gup-subgp} holds.
\end{proof}
 Replacing $A$ by $A''$, we have thus reduced to understanding possible inclusions ${\mathcal P}_A(A',n)\subseteq  {\mathcal P}_A(A,m)$. We reduce this problem further with the following proposition.

\begin{prop}\label{lem:inflate}
	For a subgroup $A'\subseteq A$ and $1\leq m,n\leq\infty$, the following are equivalent:
	\begin{enumerate}[label = \roman*)]
		\item We have ${\mathcal P}_A(A',n)\subseteq {\mathcal P}_A(A,m)$.
		\item We have ${\mathcal P}_{A/A'}(\{ 0\},n)\subseteq {\mathcal P}_{A/A'}(A/A',m)$.
	\end{enumerate}
\end{prop}
 
\begin{proof}
	We use \cite[\S 2.1, (H)]{Balmer-Sanders}. The adjoint functors \[ \Phi^{A'}\colon \sp_A\leftrightarrow\sp_{A/A'}\colon \mathrm{Inf}\]
	satisfy $\Phi^{A'}\circ\mathrm{Inf}\simeq\mathrm{id}$. Hence the induced maps on spectra exhibit $\mathrm{Spc}(\sp_{A/A',(p)}^\omega)$ as a retractive subspace of $\mathrm{Spc}(\sp_{A,(p)}^\omega)$. Furthermore, by
	\cite[Prop.~4.7]{Balmer-Sanders}, we have $\mathrm{Spc}(\Phi^{A'})({\mathcal P}_{A/A'}(\{ 0\},n))={\mathcal P}_{A}(A',n)$ and
	$\mathrm{Spc}(\Phi^{A'})({\mathcal P}_{A/A'}(A/A',m))={\mathcal P}_{A}(A,m)$, which concludes the proof.
\end{proof}

To summarize, for the proof of \Cref{thm:topology} we can assume $A'\subseteq A''$ with $A''/A'$ a $p$-group, by the discussion immediately following \Cref{thm:topology}, we can then assume that $A=A''$ by \Cref{lem:going-up}, and that $A'=0$ by \Cref{lem:inflate}. This now reduces the proof of \Cref{thm:topology} to the following special case.
\begin{thm}\label{thm:key-lemma}
	Assume $A$ is a finite abelian $p$-group and $1\leq m,n\leq\infty$. 
	Then the following are equivalent:
	\begin{enumerate}[label = \roman*)]
		\item \label{it:key-sub} We have ${\mathcal P}(\{ 0 \},n)\subseteq {\mathcal P}(A,m)$.
		\item \label{it:key-prank} We have $n\ge m+\prank(A)$.
	\end{enumerate}
\end{thm}

\begin{proof}
	Let $k:=\prank(A)$.
	We first assume that $m,n<\infty$.
	To show that \ref{it:key-sub} implies \ref{it:key-prank}, we will prove the contrapositive. In other words, we assume that $m>n-k$, and then we will show that 
	${\mathcal P}(\{ 0\},n)\not\subseteq {\mathcal P}(A,m)$. To do this, we consider two cases. 

	First assume that $n\ge k$. Since $m\ge n-k+1\ge 1$, we have
	${\mathcal P}(A,m)\subseteq {\mathcal P}(A,n-k+1)$, and it suffices to show that
	${\mathcal P}(\{ 0\},n)\not\subseteq {\mathcal P}(A,n-k+1)$, i.e., that there is some
	$X\in\sp_{A,(p)}^\omega$ with type$(\Phi^{\{ 0 \}}(X))\ge n$ and type$(\Phi^A(X))\leq n-k$.
	Our example will achieve equality in both cases. To start, use \Cref{prop:complexes}
	to choose some $Y\in\sp_{(\mathbb{Z}/p\mathbb{Z})^{\times n},(p)}^\omega$ with type$(\Phi^{\{ 0 \}}(Y))=n$ and
	type$(\Phi^{(\mathbb{Z}/p\mathbb{Z})^{\times n}}(Y))=0$. We choose a subgroup $(\mathbb{Z}/p\mathbb{Z})^{\times k}\subseteq (\mathbb{Z}/p\mathbb{Z})^{\times n}$,
	and consider $Z:=\mathrm{Res}^{(\mathbb{Z}/p\mathbb{Z})^{\times n}}_{(\mathbb{Z}/p\mathbb{Z})^{\times k}}(Y)\in\sp_{(\mathbb{Z}/p\mathbb{Z})^{\times k},(p)}^\omega$.
	It satisfies type$(\Phi^{\{ 0 \}}(Z))=n$, and the type of $\Phi^{(\mathbb{Z}/p\mathbb{Z})^{\times n}}(Y)\simeq \Phi^{(\mathbb{Z}/p\mathbb{Z})^{\times n}/(\mathbb{Z}/p\mathbb{Z})^{\times k}}\left( \Phi^{(\mathbb{Z}/p\mathbb{Z})^{\times k}}(Z) \right)$ is zero. Since the individual fixed-point functors can drop height at most by the rank of the group involved by \Cref{cor:bound-height-drop},
	we conclude that type$(\Phi^{(\mathbb{Z}/p\mathbb{Z})^{\times k}}(Z))=n-k$. Denoting by $X$ the inflation of 
	$Z$ along $A\to A/pA\simeq (\mathbb{Z}/p\mathbb{Z})^{\times k}$, it is now clear that $X$ has the desired properties. 

	In the case $n<k$, it suffices to check that ${\mathcal P}(\{ 0\},k) \not\subseteq {\mathcal P}(A,m)$, i.e., that there exits $X\in\sp_{A,(p)}^\omega$ with type$(\Phi^{\{ 0 \}}(X))\ge k$ and type$(\Phi^A(X))\leq m-1$. The existence of such a spectrum follows again from \Cref{prop:complexes}, since $k$ is the $p$-rank of $A$, and hence we can even
	find some $X\in\sp_{A,(p)}^\omega$ with type$(\Phi^{\{0\}}(X))=k$ and type$(\Phi^A(X))=0$.

	To see that \ref{it:key-prank} implies \ref{it:key-sub}, we need to see that if $X\in\sp_{A,(p)}^\omega$ satisfies
	type$(\Phi^{\{ 0 \}}(X))\ge n\geq m+k$, then type$(\Phi^A(X))\ge m$. This is precisely the content of \Cref{cor:bound-height-drop}.

	The remaining cases with $\infty\in\{ m,n\}$ reduce to showing that ${\mathcal P}(\{ 0 \} , \infty)\subseteq
	{\mathcal P}(A,\infty)$ which follows from the above as in \cite[Cor.~7.2]{Balmer-Sanders}.
	\end{proof}

\begin{proof}[Proof of \Cref{thm:main}]
	To see that \ref{it:main-fn} implies \ref{it:main-chain}, take subgroups $A'\subseteq A''\subseteq A$ such that $A''/A'$ is a $p$-group, say of $p$-rank $k$, and also fix some $X \in \sp_{A,(p)}^\omega$. We wish to show that 
	\[ f(A'')=\mathrm{type} (\Phi^{A''}(X))\geq \mathrm{type}(\Phi^{A'}(X))-k\,\,\, (=f(A')-k). \]
	Restricting to $A''$ we can assume that $A=A''$, and using that in this case $\Phi^A=\Phi^{A/A'}\circ\Phi^{A'}$, we can further reduce to $A'=\{ 0\}$. In this case, the desired conclusion is \Cref{cor:bound-height-drop}.

	To see that \ref{it:main-chain} implies \ref{it:main-fn}, consider the following subset
	\[ \mathrm{Spc}(\sp_{A,(p)}^\omega)\supseteq{\mathcal Z}:=\{ {\mathcal P}(A',n)\, \mid\, 
	\forall A'\subseteq A : n > f(A')\} \]
	(where, in the above expression, we use the convention $\infty\ngtr\infty$). This subset is closed by our assumption \ref{it:main-chain} on the function $f$ and \Cref{thm:topology}. Indeed, if ${\mathcal P}(A',n) \in \mathcal{Z}$ and ${\mathcal P}(B,l) \subseteq  {\mathcal P}(A',n)$ (i.e., ${\mathcal P}(B,l)\in\overline{{\mathcal P}(A',n)}$), then 
	\[
	l \ge n+ \prank(A'/B) > f(A') + \prank(A'/B) \ge f(B),
	\]
hence ${\mathcal P}(B,l) \in \mathcal{Z}$. The open complement of $\mathcal Z$ is quasi-compact by \cite[Prop.~10.1]{Balmer-Sanders}. By \cite[Prop.~2.14]{balmer-the-spectrum}, there is some $X\in\sp_{A,(p)}^\omega$ with supp$(X)={\mathcal Z}$. It is clear that this $X$ has the desired properties.
\end{proof}

\begin{proof}[Proof of \Cref{thm:kuhnplus}]
	Fix an integer $n\ge 1$, set $k:=\prank(A)$, and recall we wish to prove that 
\[ \{ X\in C^0_p\,\mid\, \Phi^A( L_{n-1}^f(S^0))\otimes X=0\}=C_p^{\max( n-\prank(A), 0 ) } .\]
	To show the inclusion
	$\subseteq$, we can assume that $n-k\ge 1$, for otherwise the claim is trivial.
	Take $X\in C_p^0$ with $\varphi^A(L_{n-1}^f (S^0))\otimes X\simeq *$.
	Since there is a ring map $L_{n-1}^f S^0\longrightarrow E$ for a Lubin--Tate
	theory $E$ at $p$ of height $n-1$, we see that 
	$\varphi^A(E)\otimes X\simeq *$ as well. Since the chromatic height of
	$\varphi^A(E)$ is $n-1-k$ (\Cref{rem:height-drop}, \ref{it:geom-ht-drop}), we have $X\in C_p^{n-k}$ by \Cref{nlem:heightcheck}, \ref{it:hc-ker}.

	To see the inclusion $\supseteq$, by the thick subcategory theorem it is sufficient
	to find a single example of some $X\in C_p^{\max( n-k, 0 ) }$ such that $\varphi^A(L_{n-1}^f(S^0))\otimes X\simeq *$. As in the proof 
	of \Cref{thm:key-lemma}, we see that there is some $Y\in\sp_{A,(p)}^\omega$
	such that type$(\Phi^A(Y))=\max( n-k,0)$ and such that type$(\Phi^{\{ 0 \}}Y)=\max (n,k)$.
	We then have $L_{n-1}^f(S^0)\otimes Y\simeq *$, and hence $\underline{L_{n-1}^f(S^0)\otimes Y}\simeq \underline{L_{n-1}^f(S^0)}\otimes Y\simeq *$, which implies:
	\[ *\simeq \varphi^A(L_{n-1}^f(S^0))\otimes \Phi^A(Y),\]
	and $X:=\Phi^A(Y)$ is as desired.
\end{proof}

\section{Blue-shift for Lubin--Tate spectra}\label{subset:blue-on-lubin-tate}

The aim of this subsection is to compute the blue-shift on Lubin--Tate spectra of ${\mathcal F}$-geometric fixed points
for general families ${\mathcal F}$ in abelian groups, see \Cref{nthm:heightdrop} below. There are many similar and overlapping results in the literature,
and while we will not try to be exhaustive here, we mention at least the following sources:  \cite{Greenlees-Sadofsky} consider the Tate cohomology of $v_n$-periodic complex oriented spectra, \cite{Hovey-Sadofsky} consider the Bousfield classes of the Tate cohomology of the $L_n$-localizations of finite spectra, \cite{stroilova-phd} considers elementary abelian $p$-groups and an alternative Tate construction, \cite{ando-morava-sadofsky} consider the $\mathbb{Z}/p\mathbb{Z}$-Tate construction on Johnson--Wilson spectra, and \cite{hkr} provide crucial results on the Lubin--Tate cohomology of finite groups which we will use below.

Fix a prime $p$, an integer $n\ge 1$, and a Lubin--Tate spectrum $E$ of height $n$
at the prime $p$, see \cite{Rez97,LurieChromaticCourse} for general background.
We denote by $L_n:=L_E$ the corresponding Bousfield localization functor, cf. \cite[Ch.~7]{ravenel-orange}. This localization only depends on $n$ (and the implicit prime $p$), and not on our choice of Lubin--Tate spectrum $E$.

\begin{definition} 
	The {\em chromatic height} of an $\mathbb{E}_\infty$-$E$-algebra $E\to R\not \simeq 0$
	is \[ \mathrm{ht}(R):=\min\{ t\ge 0\,\mid\, R\xrightarrow{\simeq} L_tR\}.\]
\end{definition}

\begin{remark} 
	Since $E\simeq L_nE$, we have $0\leq\mathrm{ht}(R)\leq n$. By analyzing chromatic fracture squares (cf. \cite[Eq.~(0.1)]{goerss-res}, one sees that $\mathrm{ht}(R)=t$ is equivalent to $K(i)_*R=0$ for $i>t$ and $K(t)_*R\neq 0$. By \cite[Thm.~1.1]{hahnheight}, the former condition follows from only knowing $K(t+1)_*R=0$, so we see that 
	$\mathrm{ht}(R)=\max\{t\ge 0\, \mid \, K(t)_*R\neq 0\}$.
\end{remark} 

Fix a finite abelian $p$-group $A$.

\begin{definition}\label{ndef:prank}\ 
	\begin{enumerate}[label = \roman*)]
		\item For a proper family ${\mathcal F}$ of subgroups of $A$, we call 
		\[ \coprank({\mathcal F}):=\min\{\prank(A')\,\mid\, A'\subseteq A\mbox{ such that }
		A'\not\in{\mathcal F} \}\]  the {\em $p$-corank of ${\mathcal F}$}.

		\item\label{it:families} For a family ${\mathcal F}$ of subgroups of $A$, there are $A$-spaces $E\cF$ and $\wt{E}\cF$, the latter of which is pointed, which are characterized, up to an essentially unique equivariant weak equivalence, by their fixed point data:
		 \begin{equation}
E\cF^K  \simeq 
	\begin{cases} 
		* & \mbox{if } K\in \cF\\
		\emptyset & \mbox{otherwise}
	\end{cases}\label{eq:EF-univ-prop}
\quad \quad \quad \wt{E}\cF^K  \simeq 
	\begin{cases} 
		* & \mbox{if } K\in \cF\\
		S^0 & \mbox{otherwise.}
	\end{cases}	
\end{equation}
		\item In \ref{it:families}, when $\cF=\{\{ 0\}\}$ is the family only containing the trivial subgroup, it is customary to write $EA:=E\cF$ and $\wt{E}A:=\wt{E}\cF$. 
		\item For a spectrum $X\in \sp$, let $\underline{X} \in \sp_A $ denote the \emph{Borel completion} of $X$, that is the unique $A$-spectrum which is Borel complete  and whose underlying spectrum is $X$ with trivial $A$-action, cf.~\cite[Sec.~6.3]{MNN17}.
		 		\item For a family ${\mathcal F}$ of subgroups of $A$, and $X\in \sp_A$, we call
		\[ \Phi^{{\mathcal F}}(X):=\left(\wt{E}{\mathcal F}\otimes X\right)^A\] the {\em ${\mathcal F}$-geometric fixed points of $X$}. As special cases, $\Phi^{\{ 0\}} (\underline{X})=t^A(X)$ is the classical Tate construction as in \cite{greenlees-may-tate}, and for the family ${\mathcal P}$ of proper subgroups of $A$, $\Phi^A(X):=\Phi^{{\mathcal P}}(X)$ are the (usual)
		$A$-geometric fixed points.
					\end{enumerate}
\end{definition}

The following is the main result of this section.

\begin{thm}\label{nthm:heightdrop}
Let $p$ be a prime, $A$ a finite abelian $p$-group, $n\ge 1$ an integer, $E$
a Lubin--Tate spectrum at $p$ of height $n$ and ${\mathcal F}$ a family of subgroups of $A$.
	Then $\Phi^{{\mathcal F}}(\underline{E})=0$
	if and only if $\coprank({\mathcal F})\ge n+1$. Otherwise, the chromatic height of the $\mathbb{E}_\infty$-$E$-algebra $\Phi^{{\mathcal F}}(\underline{E})$ is given by
	\[ \mathrm{ht}(\Phi^{{\mathcal F}}(\underline{E}))=\mathrm{ht}(E) - \coprank({\mathcal F}) = n - \coprank({\mathcal F}).\]
\end{thm}

\begin{rem}\label{rem:height-drop}\ 

	\begin{enumerate}[label = \roman*)]
		\item In the language of \cite{MNN}, the vanishing criterion in \Cref{nthm:heightdrop} for $\Phi^{{\mathcal F}}(\underline{E})$ is equivalent to the determination of the 
		derived defect base of $\underline{E}$: Since $\wt{E}{\mathcal F}\otimes
		\underline{E}$ is a ring spectrum, its $A$-fixed points, i.e., $\Phi^{{\mathcal F}}(\underline{E})$, vanish if and only if
		it is itself equivariantly contractible, i.e., $\wt{E}{\mathcal F}\otimes\underline{E}=0$. By definition, this is equivalent to
		${\mathcal F}$ containing the derived defect base of $\underline{E}$ which consists of those subgroups of $A$ of $p$-rank at 
		most $n$ by \cite[Prop.~5.36]{MNN}. \Cref{nthm:heightdrop} extends that result by further identifying how the chromatic height varies for all families of subgroups of $A$.

		\item \label{it:geom-ht-drop} For the family ${\mathcal P}$ of proper subgroups of a non-trivial finite abelian group $A$, the $p$-corank $\coprank({\mathcal P})=\prank(A)$ is
		the $p$-rank of $A$, hence the chromatic height of the {\em geometric} fixed points 
		$\Phi^A(\underline{E})=\Phi^{\mathcal P}(\underline{E})$ of 
		$\underline{E}$ drops by the $p$-rank of $A$. After taking into account \cite[Prop.~3.20]{greenlees_may_equivariant}, this case was implicitly studied in \cite[p.~1015]{Sta13}.
			\item If ${\mathcal F}$ is taken to be the family of subgroups 
		of $p$-rank at most $m<\prank(A)$, then the chromatic height drops by $\coprank({\mathcal F})=m+1$. In particular,
		every height drop between $0$ and rk$_p(A)$ can be realized by a suitable ${\mathcal F}$-geometric fixed points functor.
	\end{enumerate}
\end{rem}

The proof of \Cref{nthm:heightdrop} will be through a series of lemmas.
We first record a folklore result, \Cref{nlem:heightcheck} below, relating the height of $\mathbb{E}_\infty$-$E$-algebras with the geometry of
Lubin--Tate space (cf. \cite[Ch.~12, Lem. 8.1(2)]{behrens-tmf}).
To formulate it, we need to fix some notation first:
Choose a map of $\mathbb{E}_1$-algebras $MU_{(p)}\to
E$ and denote by $v_i\in\pi_{2(p^i-1)}(E)$ ($i\ge 0$, $v_0:=p$) the images
of the Araki-generators of the same name under this map. We can arrange that $v_n$ is a unit which admits a root $v_n=u_n^{1-p^n}$ (hence $u_n$ is of degree $-2$), and we then have $u_i:=v_i u_n^{p^i-1}\in \pi_0(E)$ ($0\leq i\leq n-1$). The deformation theory of formal groups implies that $\pi_*(E)=W(k)[[u_1,\ldots,u_{n-1}]][u_n^{\pm 1}]$
and that the $u_0, \ldots , u_{n-1}$ determine
the height filtration of the formal part of the universal $p$-divisible group ${\mathcal G}$ over $\mathrm{Spec}(\pi_0(E))$.
For every $0\leq t\leq n-1$, we denote by $I_{t+1}:=(u_0,\ldots u_t)\subseteq\pi_0(E)$ the ideal which
cuts out the locus of height at least $t+1$. We also set $I_{n+1}:=(1)$.

Recall (\cite[Def.~8.5, Thm.~8.42]{DAGVII}) that a non-connective spectral DM-stack can 
be thought of as a pair ${\mathfrak X}=(X,{\mathcal O}_{{\mathfrak X}})$ consisting of a 
classical Deligne-Mumford stack $(X,\mathcal{O}_X)$ (\cite{LMB}) and a hyper-complete sheaf ${\mathcal O}_{{\mathfrak X}}$ of $\mathbb{E}_\infty$-rings on the \'etale site of $X$ such that
$\pi_0({\mathcal O}_{{\mathfrak X}})\simeq{\mathcal O}_X$ and such that the ${\mathcal O}_X$-module $\pi_i({\mathcal O}_{{\mathfrak X}})$
is quasi-coherent for all $i\in\mathbb{Z}$. Every $\mathbb{E}_\infty$-ring $R$
canonically determines a non-connective spectral DM-stack $\mathrm{Sp\acute et}(R)=
(\mathrm{Spec}(\pi_0(R)), {\mathcal O}_{\mathrm{Sp\acute et}(R)})$  (cf. \cite[\S 1.4.2]{SAG})\footnote{The construction of ${\mathcal O}_{\mathrm{Sp\acute et}(R)}$ will partially be recalled during the proof of \Cref{nlem:section} below.}.

\begin{lemma}\label{nlem:heightcheck}
	Assume that $\emptyset\neq {\mathfrak X}=(X,{\mathcal O}_{{\mathfrak X}})\xrightarrow
	{f}\mathrm{Sp\acute et}(E)$ is a non-connective spectral DM-stack, and $0\leq t\leq n$ is an integer.
	Then the following are equivalent:
	\begin{enumerate}[label = \roman*)]
		 \item \label{it:hc-locsheaf} The sheaf of $\mathbb{E}_\infty$-rings ${\mathcal O}_{{\mathfrak X}}$ is $L_t$-local, i.e., for every \'etale open $U\to X$, the $\mathbb{E}_\infty$-ring ${\mathcal O}_{\mathfrak X}(U)$ is $L_t$-local.

		 \item \label{it:hc-cutout} The map of underlying spaces \cite[\S 1.5.4]{SAG} determined by $f$, namely $\left| f\right|\colon \left| X\right|\to\left|\mathrm{Sp\acute et}(E) \right| 
		 =\mathrm{Spec}(\pi_0(E))$ factors through the open subscheme 
		 $\mathrm{Spec}(\pi_0(E))\setminus V(I_{t+1})$  (i.e., the locus of height at most $t$).

		 \item \label{it:hc-ker} For every finite $p$-local spectrum $F$ of type greater than $t$, we have $\mathcal{O}_{\mathfrak{X}}\otimes F\simeq 0$.
		 \end{enumerate}
		 In particular, the chromatic height of the global sections $\Gamma({\mathfrak X},{\mathcal O}_{{\mathfrak X}})$ is given by 
	  \begin{equation}\label{height-formula}  \mathrm{ht}(\Gamma({\mathfrak X},{\mathcal O}_{{\mathfrak X}}))=\max \{\mathrm{ht}(({\mathcal G}_\Omega)^{\mathrm{for}})\, \mid \, \Omega\to X\mbox{ a geometric point}\},
	 \end{equation}  
	 	 where $\mathrm{ht}(({\mathcal G}_\Omega)^{\mathrm{for}})$ denotes the height of the formal part of 
	 the base-change of ${\mathcal G}$ to $\Omega$ (along the composition $\Omega\to X\to\mathrm{Spec}(\pi_0(E)$).

 \end{lemma}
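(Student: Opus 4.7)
The overall strategy is to translate the chromatic condition (i) on the spectral sheaf $\mathcal{O}_{\mathfrak{X}}$ into the algebro-geometric condition (ii) on the underlying map $|f|$. The key input is the standard identification, essentially due to the deformation-theoretic construction of Lubin--Tate space (cf.\ \cite{SAG}), between $L_t E$ and the $\mathbb{E}_\infty$-ring of sections of the structure sheaf of $\mathrm{Sp\acute et}(E)$ over the open subscheme $U_t := \mathrm{Spec}(\pi_0 E) \setminus V(I_{t+1})$, which is precisely the height $\leq t$ locus of the universal $p$-divisible group $\mathcal{G}$. Since $L_t$-localization is smashing on the category of $L_n$-local spectra, this extends to arbitrary $\mathbb{E}_\infty$-$E$-algebras $R$: the ring $R$ is $L_t$-local if and only if the map $\mathrm{Spec}(\pi_0 R) \to \mathrm{Spec}(\pi_0 E)$ factors through $U_t$, equivalently, $I_{t+1}$ generates the unit ideal in $\pi_0 R$.

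To prove (i) $\Leftrightarrow$ (ii), I will work étale-locally on $X$, choosing an étale affine cover $\{\mathrm{Sp\acute et}(R_\alpha) \to \mathfrak{X}\}$, where each $R_\alpha$ becomes an $\mathbb{E}_\infty$-$E$-algebra via the composition with $f$. Then (i) holds if and only if each $R_\alpha$ is $L_t$-local, while (ii) holds if and only if each $\mathrm{Spec}(\pi_0 R_\alpha) \to \mathrm{Spec}(\pi_0 E)$ factors through $U_t$, and these are equivalent by the above. For (i) $\Rightarrow$ (iii), any finite $p$-local $F$ of type $>t$ is $K(i)$-acyclic for all $i \leq t$ and hence $L_t$-acyclic, so $F \otimes \mathcal{O}_{\mathfrak{X}}$ vanishes sectionwise as soon as $\mathcal{O}_{\mathfrak{X}}$ is sectionwise $L_t$-local. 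For (iii) $\Rightarrow$ (ii) I argue by contrapositive: if $|f|$ meets $V(I_{t+1})$, then at a geometric point $\Omega$ landing there, the formal group $(\mathcal{G}_\Omega)^{\mathrm{for}}$ has height $s \geq t+1$, so the stalk $\mathcal{O}_{\mathfrak{X},\Omega}$ carries a formal group of height $s$ and in particular satisfies $K(s)_* \mathcal{O}_{\mathfrak{X},\Omega} \neq 0$; for any finite $p$-local $F$ of type $s$ (which has $K(s)_* F \neq 0$), the Künneth isomorphism over the graded field $\pi_* K(s)$ gives $F \otimes \mathcal{O}_{\mathfrak{X},\Omega} \not\simeq 0$, hence $F \otimes \mathcal{O}_{\mathfrak{X}} \not\simeq 0$, contradicting (iii).

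The height formula is then immediate: the chromatic height of $\Gamma(\mathfrak{X}, \mathcal{O}_{\mathfrak{X}})$ is the minimal $t$ for which $\Gamma(\mathfrak{X}, \mathcal{O}_{\mathfrak{X}})$ is $L_t$-local, which by passing between sectionwise and global $L_t$-locality equals the minimal $t$ for which $|f|$ factors through $U_t$; a geometric point $\Omega \to X$ lies in $V(I_{t+1})$ precisely when $(\mathcal{G}_\Omega)^{\mathrm{for}}$ has height at least $t+1$, so this minimal $t$ equals $\max_\Omega \mathrm{ht}((\mathcal{G}_\Omega)^{\mathrm{for}})$. The main obstacle I anticipate is the careful passage between the sectionwise $L_t$-locality of the sheaf $\mathcal{O}_{\mathfrak{X}}$ and the $L_t$-locality of its global sections: one direction uses that the collection of $L_t$-local spectra is closed under homotopy limits, while the other requires that a nonempty height $\geq t+1$ locus in $|X|$ produces a nonvanishing $K(s)$-class in $\Gamma(\mathfrak{X},\mathcal{O}_\mathfrak{X})$ for some $s > t$, which ultimately rests on the detection-type results for Lubin--Tate cohomology from \cite{hkr}.
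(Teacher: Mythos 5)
Your overall skeleton is close to the paper's: reduce (i) $\Leftrightarrow$ (ii) \'etale-locally to the affine case, prove (i) $\Rightarrow$ (iii) from smashingness, and then close the circle. However, there are two points where the proposal leaves genuine gaps.

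First, the crux of (i) $\Leftrightarrow$ (ii) is precisely the assertion you invoke as "standard": that for an $\mathbb{E}_\infty$-$E$-algebra $R$, $L_t$-locality of $R$ is equivalent to $I_{t+1}\cdot\pi_*R = \pi_*R$. The paper does not take this for granted; it establishes it using the Greenlees--May fiber sequence $K(I_{t+1})\to E\to E[I_{t+1}^{-1}]$, the smashing identification $L_tR\simeq E[I_{t+1}^{-1}]\otimes_E R$, and an inspection of the Koszul-type complex $K(I_{t+1})\otimes_E R\simeq K(I_{t+1}\cdot\pi_*R)$. Without some version of this argument (or an explicit citation proving it), (i) $\Leftrightarrow$ (ii) has not actually been proved.

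Second, and more seriously, your contrapositive argument for (iii) $\Rightarrow$ (ii) hinges on the unproved claim that the stalk $\mathcal{O}_{\mathfrak{X},\Omega}$ at a geometric point of formal height $s\geq t+1$ satisfies $K(s)_*\mathcal{O}_{\mathfrak{X},\Omega}\neq 0$. It is not automatic that "carrying a formal group of height $s$" on the residue field forces nonvanishing of $K(s)$-homology of the whole stalk; what one must actually argue is that the stalk is a nonzero $\mathbb{E}_\infty$-$E$-algebra whose $\pi_0$ is strictly Henselian local with $u_0,\dots,u_{s-1}$ in the maximal ideal and $u_s$ a unit, so that (by the same local (i) $\Leftrightarrow$ (ii) criterion applied to the stalk) it is $L_s$-local but not $L_{s-1}$-local, and then conclude $K(s)_*\neq 0$ via chromatic fracture. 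This is essentially the content of the paper's direct proof of (iii) $\Rightarrow$ (i), which instead observes that (iii) kills $R\otimes T(\ell)$ for $\ell>t$, deduces $K(\ell)_*R=0$ by a K\"unneth argument over the graded field $K(\ell)_*$, and then assembles $R\xrightarrow{\simeq}L_t R$ from the chromatic fracture squares and $L_n$-locality. The paper's route is both cleaner (it applies to any \'etale section, not only stalks, and avoids any local-ring considerations) and self-contained. You correctly flag the sectionwise-vs-global $L_t$-locality subtlety in the height formula, but note that the missing direction is most easily handled via (iii) and the observation that $\mathcal{O}_{\mathfrak{X}}(U)\otimes F\simeq \mathcal{O}_{\mathfrak{X}}(U)\otimes_{\Gamma(\mathfrak{X},\mathcal{O}_{\mathfrak{X}})}(\Gamma(\mathfrak{X},\mathcal{O}_{\mathfrak{X}})\otimes F)$, rather than by trying to propagate $L_t$-locality from global sections to sections directly.
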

 
 \begin{proof} 
	 We show the equivalence between \ref{it:hc-locsheaf} and \ref{it:hc-ker} first: If $R$ is an $L_t$-local ring spectrum and $F$ is a finite $p$-local spectrum of type $\ell>t$, then $R\otimes F\simeq L_t(R)\otimes F\simeq R\otimes L_t(F)\simeq 0$ since $L_t$ is smashing. Applying this with $R={\mathcal O}_{\mathfrak{X}}(U)$ shows that i) implies iii). Conversely, \ref{it:hc-ker} implies that for every $\ell> t$, we have $R\otimes T(\ell)\simeq 0$,
	 and thus $0=K(\ell)^*(R\otimes T(\ell))\simeq K(\ell)^*(R)\otimes_{K(\ell)^*}K(\ell)^*(T(\ell))$. Since $K(\ell)^*(T(\ell))\neq 0$ and $K(\ell)^*$ is a (graded) field, we deduce
	 that $K(\ell)^*(R)=0$. Applying this to the relevant chromatic fracture squares, we see that $L_k(R)\stackrel{\simeq}{\to} L_t(R)$ for all $k\geq t$.  Since $\mathcal{O}_{\mathfrak{X}}$ is a sheaf of $L_n$-local ring spectra, this implies
	  $R\stackrel{\simeq}{\to}L_n(R)\stackrel{\simeq}{\to}L_t(R)$, and hence that \ref{it:hc-locsheaf} holds.
	  	 
	 To establish the equivalence between the first two conditions, we first observe that both \ref{it:hc-locsheaf} and \ref{it:hc-cutout} are \'etale local on $X$:
	 For \ref{it:hc-locsheaf}, this is just the sheaf condition for ${\mathcal O}_{\mathfrak{X}}$ together with the fact that any limit
	 of $L_t$-local spectra is $L_t$-local. For \ref{it:hc-cutout}, this follows more directly because the underlying map of any \'etale cover is surjective.
	 
	 We can thus assume that ${\mathfrak X}=\mathrm{Sp\acute et}(R)$ for some $\mathbb{E}_\infty$-$E$-algebra $E\to R\not \simeq 0$. To settle this special case, we will freely use
	 the notation and results of \cite{greenleesmayMUmodules}. Specifically, we have $L_t(R)\simeq R[I_{t+1}^{-1}]\simeq E[I_{t+1}^{-1}]\otimes_E R$, and there is a fiber sequence of $E$-modules
	 \[ K(I_{t+1})\longrightarrow E\longrightarrow E[I_{t+1}^{-1}]. \]
	 
	 This shows that \ref{it:hc-locsheaf} is equivalent to $K(I_{t+1})\otimes_E R\simeq 0.$
	 Direct inspection of the construction of $K(I_{t+1})$ shows that $K(I_{t+1})\otimes_E R\simeq K(I_{t+1}\cdot\pi_* R)$, and that $K(I_{t+1}\cdot\pi_* R)\simeq 0$ is equivalent
	 to $I_{t+1}\cdot\pi_* R=\pi_*R$, which is equivalent to \ref{it:hc-cutout}.

	 Finally, the formula for $\mathrm{ht}(\Gamma({\mathfrak X},{\mathcal O}_{{\mathfrak X}}))$ in \Cref{height-formula} follows because, for every $0\leq t\leq n$, condition \ref{it:hc-cutout} can be checked on geometric points.
 \end{proof}
 
 \begin{rem} In the above proof we used the well-known implication
 $T(\ell)_*(X)=0\Rightarrow K(\ell)_*(X)=0$, valid for any spectrum $X$.
 The reverse implication is the telescope conjecture, now believed by many to be false.
 One can, however, establish the reverse implication for up-to-homotopy ring spectra,
 using the nilpotence theorem of \cite{Hopkins-Smith}.
 \end{rem}

 Recall that we fixed a finite abelian $p$-group $A$, a family ${\mathcal F}$ of
 its subgroups and a Lubin--Tate spectrum $E$ of height $n$ at $p$.
 To apply \Cref{nlem:heightcheck} to determine the chromatic height of the $\mathbb{E}_\infty$-$E$-algebra $\Phi^{{\mathcal F}}(\underline{E})$,
 we review the modular interpretation of the $\mathbb{E}_\infty$-$E$-algebra $E^{BA_+}$. We will show, 
 in particular, that all $\Phi^{{\mathcal F}}(\underline{E})$ occur as suitable local sections
 of its structure sheaf. So we study in some detail the affine spectral scheme $\mathrm{Sp\acute et}
 (E^{BA_+})$ corresponding
 to the $\mathbb{E}_\infty$-ring $E^{BA_+}$.
 The first step is to recall the determination of the (classical) commutative ring $\pi_0(E^{BA_+}$) in algebro-geometric terms. The commutative ring $\pi_0(E)$ carries a one-dimensional formal group $F$ which is a universal deformation of its special fiber. The system ${\mathcal G}:=(F[p^k])_
{k\ge 0}$ of $p$-power torsion constitutes a $p$-divisible group over $\pi_0(E)$\footnote{More is true: This is part of an equivalence between $p$-divisible commutative formal Lie groups over $\pi_0(E)$ and connected $p$-divisible groups over $\pi_0(E)$ \cite[Prop.~1]{tate}.}.
We denote by $A^*$ the Pontryagin dual of $A$. Choosing $N$ so large that $p^NA=0$, we consider the functor $\underline{\mathrm{Hom}}(A^*,{\mathcal G}[p^N])$ on $\pi_0(E)$-algebras valued in abelian groups, which sends every $R$ to the group of homomorphisms from 
$A^*$ to ${\mathcal G}[p^N](R)$. 

\begin{prop}\label{prop:piEBA}
There is an isomorphism of finite flat group schemes of rank $|A|^n$ over $\pi_0(E)$
\[ \mathrm{Spec}(\pi_0(E^{BA_+}))\simeq \underline{\mathrm{Hom}}(A^*,{\mathcal G}[p^N]).\]
\end{prop}

\begin{proof} The isomorphism is \cite[Prop 5.12]{hkr}. The computation of the rank is immediate, cf. \cite[Sec.7]{strickland-formal}.
\end{proof}

\begin{rem}\
\begin{enumerate}[label = \roman*)]
\item The above can be rephrased by saying that $\mathrm{Sp\acute et}(E^{BA_+})$ is an even periodic enhancement of the composition
  \[ \underline{\mathrm{Hom}}(A^*,{\mathcal G}[p^N])\longrightarrow \mathrm{Spec}(\pi_0(E))
 \longrightarrow M_{FG}\]
 in the sense of \cite[Def.~2.5]{akhil_affiness}.
\item For cyclic $A$, \Cref{prop:piEBA} admits an interesting generalization from the case of $BA=K(A,1)$ to considering $K(A,m)$ for arbitrary $m\ge 1$ instead, see \cite[Thm.~3.4.1]{ambidex}.
\end{enumerate}
\end{rem}

  We next consider the principal open subschemes of $\underline{\mathrm{Hom}}(A^*,{\mathcal G}[p^N])$
 determined by Euler classes.
 We fix a coordinate $t\colon BS^1_+ \longrightarrow E$ for the formal group $F={\mathcal G}^{\mathrm{for}}$, and for every character 
 $\rho\colon A\longrightarrow S^1$ refer to the composition
 \[ e(\rho):=\left( BA_+\xrightarrow{B\rho_+}BS^1_+\xrightarrow{t} E \right)\in\pi_0(E^{BA_+})\]
 as {\em the Euler class of $\rho$}. The principal open subscheme determined by $\rho$ is
 
 \[ U(e(\rho)):=\mathrm{Spec}(\pi_0(E^{BA_+})[e(\rho)^{-1}])\subseteq\mathrm{Spec}(\pi_0(E^{BA_+}))\simeq\underline{\mathrm{Hom}}(A^*,{\mathcal G}[p^N]).\]
 
Observe that there is a closed immersion \[\underline{\mathrm{Hom}}(\mathrm{ker}(\rho)^*,{\mathcal G}[p^N])
\hookrightarrow \underline{\mathrm{Hom}}(A^*,{\mathcal G}[p^N])\] determined by pullback along the surjective
restriction of characters $A^*\to \mathrm{ker}(\rho)^*$. The following \Cref{nprop:principalopens} is a basic observation.
After translating between affine schemes and algebra, the proof is just as in \cite[Prop.~3.20]{greenlees_may_equivariant}. We denote by ${\mathcal O}_{\underline{\mathrm{Hom}}(A^*,{\mathcal G}[p^N])}$ the structure sheaf of the affine derived scheme $\mathrm{Sp\acute et}(\pi_0(E^{BA_+}))$, taking the result of \Cref{prop:piEBA} as an identification in the following.

\begin{prop}\label{nprop:principalopens}
	In the above situation, we have
	\begin{enumerate}[label = \roman*)]
		\item \label{it:principal-cutout} an equality \[U(e(\rho))=\underline{\mathrm{Hom}}(A^*,{\mathcal G}[p^N])\setminus\underline{\mathrm{Hom}}(\mathrm{ker}(\rho)^*,{\mathcal G}[p^N])\] of open subschemes of
		$\underline{\mathrm{Hom}}(A^*,{\mathcal G}[p^N])$ and 

		\item \label{it:principal-sections} an identification of $E^{BA_+}$-algebras \[ \Gamma(U(e(\rho)),{\mathcal O}_{\underline{\mathrm{Hom}}(A^*,{\mathcal G}[p^N])})\simeq \Phi^{[\leq\mathrm{ker}(\rho)]}(\underline{E}),\]
		where $[\leq\mathrm{ker}(\rho)]:=\{ A'\subseteq A\,\mid\, A'\subseteq\mathrm{ker}(\rho)\}$ denotes the family of
		subgroups of $A$ on which $\rho$ vanishes.
	\end{enumerate}
\end{prop}

\begin{proof}  
	To prove \ref{it:principal-cutout}, we observe there is an obvious cartesian square
	\[ \xymatrix{ \underline{\mathrm{Hom}}(\mathrm{ker}(\rho)^*,{\mathcal G}[p^N]) \ar[d]
	\ar[r] \ar@{^{(}->}[r] & \underline{\mathrm{Hom}}(A^*,{\mathcal G}[p^N]) \ar[d] \\
	\{ 0 \} \ar@{^{(}->}[r] & \underline{\mathrm{Hom}}(\mathrm{im}(\rho)^*,{\mathcal G}[p^N]) .}\]
	The zero section of $\underline{\mathrm{Hom}}(\mathrm{im}(\rho)^*,{\mathcal G}[p^N])$
	is given by the vanishing of the Euler class $e(\mathrm{im}(\rho)\subseteq S^1)$ (because $\mathrm{im}(\rho)$ is cyclic of $p$-power order),
	and the naturally of $e(-)$ implies that the closed immersion 
	$\underline{\mathrm{Hom}}(\mathrm{ker}(\rho)^*,{\mathcal G}[p^N])\hookrightarrow\underline{\mathrm{Hom}}(A^*,{\mathcal G}[p^N])$ is given by the vanishing of $e(\rho)$.

	For the proof of \ref{it:principal-sections}, we will need a bit more information about $e(\rho)$. We will abuse notation and let $\rho$ denote the corresponding complex representation. Applying one point compactification to the inclusion $0\hookrightarrow \rho$, we obtain a map of based $A$-spaces $e(\rho)^\prime \colon S^0\to S^{\rho}$. Smashing $e(\rho)^\prime$ with $\underline{E}$, taking $A$ fixed points, and using our complex orientation, we obtain the $E^{BA_+}$-module map $E^{BA_+}\to E^{BA_+}$ corresponding to the map $e(\rho)$ above \cite[\S 5.1]{MNN}. 

	Now we do have $\Gamma(U(e(\rho)),{\mathcal O}_{\underline{\mathrm{Hom}}(A^*,{\mathcal G}[p^N])})\simeq
	E^{BA_+}[e(\rho)^{-1}]$ by the construction of the structure sheaf ${\mathcal O}_{\underline{\mathrm{Hom}}(A^*,{\mathcal G}[p^N])}$. To see the claim, we will use the equivalence $\wt{E}[\leq\mathrm{ker}(\rho)]\simeq\mathrm{colim}_n S^{n\rho}$ (which can be checked using \Cref{ndef:prank}, \ref{it:families})
	with transition maps given by multiplication with $e(\rho)^\prime$. Using this, we see
	\begin{align*}
		 \Phi^{[\leq\mathrm{ker}(\rho)]}(\underline{E}) &= \left( \wt{E}[\leq\mathrm{ker}(\rho)]\otimes\underline{E}\right)^A\\
		 	&\simeq  \mathrm{colim}_n \left( \underline{E}^A\xrightarrow{\cdot e(\rho)} \underline{E}^A\xrightarrow{\cdot e(\rho)}\cdots\right)\\
		 	& \simeq \underline{E}^A[e(\rho)^{-1}]\\
		 	& \simeq E^{BA_+}[e(\rho)^{-1}]\\
		 	& \simeq \Gamma(U(e(\rho)),{\mathcal O}_{\underline{\mathrm{Hom}}(A^*,{\mathcal G}[p^N])}).\qedhere
	\end{align*} 
\end{proof}
 
We want to generalize \Cref{nprop:principalopens}, \ref{it:principal-sections} by finding an open subscheme (typically non-principal) of $\underline{\mathrm{Hom}}(A^*,{\mathcal G}[p^N])$ over which 
the sections are $\Phi^{{\mathcal F}}(\underline{E})$ for a given family ${\mathcal F}$. We also want this open subscheme to have a modular interpretation, as in \Cref{nprop:principalopens}, \ref{it:principal-cutout} above, which will ultimately allow for the height computation of \Cref{nthm:heightdrop}.

We begin by observing that for every family ${\mathcal F}$ we have 
\begin{equation}\label{neq:familydecomp}
	{\mathcal F}=\bigcup_{A'\in{\mathcal F}}[\leq A'] = \bigcup _{A'\in{\mathcal F}}\bigcap_{\rho\in (A/A')^*}[\leq\mathrm{ker}(\rho)].
\end{equation}
The first equality in \eqref{neq:familydecomp} is trivial and the second one follows from duality of finite abelian groups.
Here, we commit a mild abuse of notation by identifying some $\rho\in(A/A')^*$ with the composition $A\to A/A' \xrightarrow{\rho} S^1$.

The decomposition (\ref{neq:familydecomp}) suggests to consider the following open 
subscheme of $\underline{\mathrm{Hom}}(A^*,{\mathcal G}[p^N])$:
\begin{equation}\label{neq:UFdecomp}
	U({\mathcal F}):= \bigcap_{A'\in{\mathcal F}}\bigcup_{\rho\in(A/A')^*} U(e(\rho)).
\end{equation}
By \Cref{nprop:principalopens}, \ref{it:principal-cutout}, this equals 
\begin{equation}\label{neq:UFexplicit}
	U({\mathcal F})=\underline{\mathrm{Hom}}(A^*,{\mathcal G}[p^N])\setminus\left(   \bigcup_{A'\in{\mathcal F}}\bigcap_{\rho\in(A/A')^*}  \underline{\mathrm{Hom}}(\mathrm{ker}(\rho)^*,{\mathcal G}[p^N]) \right).
\end{equation}
This last equation gives us the desired modular interpretation of $U({\mathcal F})$, and we can also identify the sections over it, as follows.

\begin{lemma}\label{nlem:section}
	In the above situation, we have
	\[ \Gamma(U({\mathcal F}),{\mathcal O}_{\underline{\mathrm{Hom}}(A^*,{\mathcal G}[p^N])})\simeq \Phi^{{\mathcal F}}(\underline{E})\]
	as algebras over $E^{BA_+}=\Gamma({\underline{\mathrm{Hom}}(A^*,{\mathcal G}[p^N])},{\mathcal O}_{\underline{\mathrm{Hom}}(A^*,{\mathcal G}[p^N])})$.
\end{lemma}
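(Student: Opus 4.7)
The plan is to reduce the general case to the principal open situation handled by Proposition \ref{nprop:principalopens}(ii) via two compatible decompositions: multiplicativity of $\widetilde{E}(-)$ under unions of families on the spectrum side, and multiplicativity of sections under intersections of open subschemes on the sheaf side. These two operations are linked by the de Morgan-type identity $U(\mathcal{F}_1 \cup \mathcal{F}_2) = U(\mathcal{F}_1) \cap U(\mathcal{F}_2)$, which is immediate from \eqref{neq:UFdecomp}.

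First, combining the identity $\mathcal{F} = \bigcup_{A'\in\mathcal{F}}[\leq A']$ from \eqref{neq:familydecomp} with the elementary observation $\widetilde{E}(\mathcal{F}_1 \cup \mathcal{F}_2) \simeq \widetilde{E}\mathcal{F}_1 \wedge \widetilde{E}\mathcal{F}_2$ (check by comparing fixed points), we obtain $\widetilde{E}\mathcal{F} \simeq \bigwedge_{A'\in\mathcal{F}} \widetilde{E}[\leq A']$. Smashing with the $\mathbb{E}_\infty$-ring $\underline{E}$ and taking $A$-fixed points, and using that for $\underline{E}$-modules $M_1,M_2$ Borel-complete over $\underline{E}$ one has $(M_1 \wedge_{\underline{E}} M_2)^A \simeq M_1^A \otimes_{E^{BA_+}} M_2^A$ (projection formula), this gives
\[ \Phi^{\mathcal{F}}(\underline{E}) \simeq \bigotimes_{A'\in\mathcal{F}}^{E^{BA_+}} \Phi^{[\leq A']}(\underline{E}). \]
On the sheaf side, by \eqref{neq:UFdecomp} we have $U(\mathcal{F}) = \bigcap_{A'\in\mathcal{F}} U_{A'}$, and since intersections of quasi-compact opens in an affine spectral scheme yield relative tensor products on sections, $\Gamma(U(\mathcal{F}),\mathcal{O}) \simeq \bigotimes_{A'\in\mathcal{F}}^{E^{BA_+}} \Gamma(U_{A'},\mathcal{O})$. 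Noting that $U_{A'} = U([\leq A'])$ (the nested intersections over $A''\subsetneq A'$ from \eqref{neq:UFdecomp} are absorbed because $(A/A')^* \subseteq (A/A'')^*$ implies $U_{A''} \supseteq U_{A'}$), the problem reduces to the case $\mathcal{F} = [\leq A']$.

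For this case, cover $U_{A'}$ by the principal opens $\{U(e(\rho))\}_{\rho \in (A/A')^*\setminus\{0\}}$. Any finite intersection $\bigcap_{\rho \in T} U(e(\rho))$ is again principal, equal to $U\bigl(e(\bigoplus_{\rho\in T}\rho)\bigr)$ by multiplicativity of Euler classes. A direct extension of Proposition \ref{nprop:principalopens}(ii) from single characters to finite sums of characters (with essentially the same proof, using $\widetilde{E}[\leq \bigcap_\rho \ker(\rho)] \simeq \mathrm{colim}_n S^{n\bigoplus_\rho \rho}$) identifies its sections with $\Phi^{[\leq \bigcap_{\rho\in T}\ker(\rho)]}(\underline{E})$. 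The Čech totalization then computes $\Gamma(U_{A'},\mathcal{O})$ as a limit over nonempty $T \subseteq (A/A')^*\setminus\{0\}$ of these $\Phi^{[\leq\bigcap\ker(\rho)]}(\underline{E})$. Matching on the spectrum side, the dual presentation $[\leq A'] = \bigcap_\rho [\leq \ker(\rho)]$ gives a corresponding Čech/Mayer--Vietoris decomposition of $\widetilde{E}[\leq A']$ in terms of $\widetilde{E}[\leq \bigcap_{\rho \in T}\ker(\rho)]$, whose fixed-point limits agree with the sheaf side term by term.

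The main obstacle is the second step: justifying the Čech-type decomposition of $\widetilde{E}[\leq A']$ on the spectrum side and its compatibility with the sheaf-theoretic Čech presentation of $\Gamma(U_{A'},\mathcal{O})$. A cleaner alternative that avoids this point entirely is to recognise $\Phi^{[\leq A']}(\underline{E})$ as the derived ideal-localization of $E^{BA_+}$ at $J_{A'}=(e(\rho))_{\rho\in (A/A')^*\setminus\{0\}}$, since smashing $\underline{E}$ with $\widetilde{E}[\leq A']$ can be computed as the filtered colimit of $S^V\wedge \underline{E}$ over representations $V$ of $A$ all of whose character summands lie in $(A/A')^*\setminus\{0\}$, with Euler-class transition maps; this is manifestly the Greenlees--May localization of $E^{BA_+}$ away from $V(J_{A'})$, which is precisely $\Gamma(U_{A'},\mathcal{O})$.
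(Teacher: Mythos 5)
Your overall structure---reduce to a single family $[\leq A']$ via multiplicativity of $\widetilde{E}(-)$ under unions and of sections under intersections, then handle $[\leq A']$ by covering $U_{A'}$ with the principal opens $U(e(\rho))$---is the same as the paper's, which runs the second step as an induction over Mayer--Vietoris squares rather than a full \v{C}ech totalization. However, both your \v{C}ech step and your proposed ``cleaner alternative'' contain the same de Morgan--type slip, and it is not cosmetic.

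The error: you identify the sections over $\bigcap_{\rho\in T}U(e(\rho))$ with $\Phi^{[\leq\bigcap_{\rho\in T}\ker\rho]}(\underline{E})$, supported by the claim $\widetilde{E}[\leq\bigcap_\rho\ker\rho]\simeq\mathrm{colim}_n S^{n\bigoplus_\rho\rho}$. Neither is true. Inverting $\prod_{\rho\in T}e(\rho)$ smashes $\underline{E}$ with $\bigotimes_{\rho\in T}\widetilde{E}[\leq\ker\rho]\simeq\widetilde{E}\bigl(\bigcup_{\rho\in T}[\leq\ker\rho]\bigr)$, so the sections over $\bigcap_\rho U(e(\rho))$ are $\Phi^{\bigcup_\rho[\leq\ker\rho]}(\underline{E})$---a \emph{union} of families, not an intersection. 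Checking $K$-fixed points confirms this: $\mathrm{colim}_n S^{n\bigoplus_\rho\rho}$ is contractible at $K$ as soon as \emph{some} $\rho\in T$ vanishes on $K$ (i.e.\ $K\in\bigcup_\rho[\leq\ker\rho]$), whereas $\widetilde{E}[\leq\bigcap_\rho\ker\rho]$ is contractible at $K$ only when \emph{every} $\rho$ does. Concretely, for $A=(\Z/p\Z)^2$, $T=\{\rho_1,\rho_2\}$ with distinct kernels, and $K=\ker\rho_1$, the colimit is contractible at $K$ although $K\not\subseteq\ker\rho_1\cap\ker\rho_2=0$. The paper's argument passes through exactly these intermediate terms $\Phi^{\bigcup_\rho[\leq\ker\rho]}(\underline{E})$ by iterating a two-variable Mayer--Vietoris square, which is why the union/intersection bookkeeping comes out right there.

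Your ``cleaner alternative'' inherits the same mistake in a more visible form. The colimit $\mathrm{colim}_V S^V$ over representations $V$ with all summands in $(A/A')^*\setminus\{0\}$ is contractible at $K$ whenever some nontrivial $\rho$ trivial on $A'$ is also trivial on $K$, i.e.\ whenever $A'+K\neq A$; this is strictly more than $K\subseteq A'$ (take $A=\Z/p^2\Z$, $A'=0$, $K=p\Z/p^2\Z$). So $\mathrm{colim}_V S^V$ is \emph{not} $\widetilde{E}[\leq A']$. Dually, $\mathrm{colim}_V(S^V\wedge\underline{E})^A$ is the naive Bousfield localization $E^{BA_+}\bigl[(\prod_\rho e(\rho))^{-1}\bigr]$, which is the sections over the \emph{intersection} $\bigcap_\rho U(e(\rho))$, a strictly smaller open than $U_{A'}=\bigcup_\rho U(e(\rho))$; it is not the Greenlees--May localization $L_{J_{A'}}(E^{BA_+})$. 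Computing that localization requires gluing the $E^{BA_+}[e(\rho)^{-1}]$ along overlaps---i.e.\ running a Mayer--Vietoris/\v{C}ech argument after all, which, with the corrected identification of the intersection terms, is what the paper does. As written, your inner step does not close.
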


\begin{proof} 
	For every open $U\subseteq X:=\underline{\mathrm{Hom}}(A^*,{\mathcal G}[p^N])$
	and $e\in\pi_0(E^{BA_+})=\pi_0\left(\Gamma(X,{\mathcal O}_X)\right)$ we have a cartesian square

	\[ \xymatrix{ \Gamma(U\cup U(e),{\mathcal O}_X)\ar[r] \ar[d] & \Gamma(U,{\mathcal O}_X) \ar[d] \\
	\Gamma(U(e),{\mathcal O}_X) \ar[r] & \Gamma(U\cap U(e),{\mathcal O}_X)\simeq \Gamma(U,{\mathcal O}_X)[e^{-1}]. } \]

	Given any family $\mathcal F$ and any character $\rho\colon A\longrightarrow S^1$, we also have a cartesian square of genuine $G$-spectra

	\[ \xymatrix{  \Sigma^\infty\left( \wt{E}\left( {\mathcal F}\cap [\leq\mathrm{ker}(\rho)]  \right)\right) \ar[r] \ar[d] & \ar[d] \Sigma^\infty\left( \wt{E}{\mathcal F} \right) \\
	\Sigma^\infty\left( \wt{E}\left( [\leq\mathrm{ker}(\rho)] \right)\right)  \ar[r] & \Sigma^\infty\left( \wt{E}\left( \mathcal F\cup [\leq\mathrm{ker}(\rho)]\right)\right)  \simeq  \Sigma^\infty\left(\wt{E}{\mathcal F}\right)\otimes \Sigma^\infty\left( \wt{E}\left( [\leq\mathrm{ker}(\rho)]\right)\right).} \]

	Assume now that $U$ and $\mathcal \cF$ are such that $\Gamma(U,{\mathcal O}_X)\simeq\Phi^{\mathcal F}(\underline{E})$ as $E^{BA_+}$-algebras. Comparison of the above two cartesian squares (with $e:=e(\rho)$) and using \Cref{nprop:principalopens}, ii) then implies that $\Gamma(U\cup U(e(\rho)),{\mathcal O}_X)\simeq \Phi^{{\mathcal F}\cap[\leq\mathrm{ker}(\rho)]}(\underline{E})$ as $E^{BA_+}$-algebras. Applying this inductively with \Cref{nprop:principalopens}, \ref{it:principal-sections} as a starting point, we see that for every
	$A'\in{\mathcal F}$, we have
	\begin{equation}\label{neq:first}
		\Gamma\left( \bigcup_{\rho\in(A/A')^*} U(e(\rho)), {\mathcal O}_X\right)\simeq
		\Phi^{\bigcap_\rho [\leq\mathrm{ker}(\rho)]}(\underline{E})\stackrel{(\ref{neq:familydecomp})}{\simeq} \Phi^{[\leq A']}(\underline{E}).
	\end{equation}
	Finally, for any two opens $U,V\subseteq X$, we have 
	\begin{equation}\label{neq:intersect}
		\Gamma(U\cap V,{\mathcal O}_X)\simeq \Gamma(U,{\mathcal O}_X)\otimes_{\Gamma(X,{\mathcal O}_X)} \Gamma(V,{\mathcal O}_X),
	\end{equation}
	and conclude
	\begin{align*}
	 \Gamma(U({\mathcal F}),{\mathcal O}_X) & \stackrel{(\ref{neq:UFdecomp})}{\simeq}
	\Gamma\left(\bigcap_{A'\in{\mathcal F}}\bigcup_{\rho\in(A/A')^*} U(e(\rho)),
	{\mathcal O}_X\right) \\ 
	& \stackrel{(\ref{neq:intersect})}{\simeq} \bigotimes_{A'\in{\mathcal F}} \Gamma(\bigcup_{\rho\in(A/A')^*} U(e(\rho)), {\mathcal O}_X) \\
	& \stackrel{(\ref{neq:first})}{\simeq} \bigotimes_{A'\in{\mathcal F}} \Phi^{[\leq A']}(\underline{E}) \\
	 & \stackrel{(*)}{\simeq} \Phi^{\bigcup_{A'\in{\mathcal F}}[\leq A']}(\underline{E})\\
	 & \stackrel{(\ref{neq:familydecomp})}{=}\Phi^{{\mathcal F}}(\underline{E}).
	\end{align*}
	For the equivalence $(*)$ we used $\wt{E}{\mathcal F}_1 \otimes \wt{E}{\mathcal F}_2\simeq
	\wt{E}({\mathcal F}_1 \cup {\mathcal F}_2)$, which is again easily inferred from \Cref{ndef:prank}, \ref{it:families}.
\end{proof}

Now we have assembled everything to prove the main result of this subsection.

\begin{proof}[Proof of \Cref{nthm:heightdrop}]
	Taking the identification of \Cref{nlem:section} and applying \Cref{height-formula} with ${\mathfrak X}:=(U({\mathcal F}),{\mathcal O}_{\underline{\mathrm{Hom}}(A^*,{\mathcal G}[p^N])}\mid_{U({\mathcal F})}) $, we find that
	\[
	\mathrm{ht}(\Phi^{{\mathcal F}}(\underline{E}))=\max \{ \mathrm{ht}(({\mathcal G}_\Omega)^{\mathrm{for}})\,\mid\, \Omega\to U({\mathcal F})\mbox{ a geometric point} \}.
	\]
	If $\Omega$ is any geometric point of $\mathrm{Spec}(\pi_0(E))$, then we have
	${\mathcal G}[p^N](\Omega)\simeq\left(\mathbb{Z}/p^N\mathbb{Z}\right)^{n-\mathrm{ht}(({\mathcal G}_\Omega)^{\mathrm{for}})}$, so using (\ref{neq:UFexplicit}), we obtain the following cumbersome, but elementary, description of the sought for
	$\mathrm{ht}(\Phi^{{\mathcal F}}(\underline{E}))$: It is the largest $t\ge 0$
	such that there is a homomorphism $\varphi\colon A^*\to\left(\mathbb{Z}/p^N\mathbb{Z}\right)^{n-t}$ such that for all $A'\in{\mathcal F}$ there is a character
	$\rho\in(A/A')^*$ such that $\mathrm{ker}(A^*\to\mathrm{ker}(\rho)^*)\not\subseteq
	\mathrm{ker}(\varphi)$.

	Observe that \[ \left\{ \mathrm{ker}(A^*\to\mathrm{ker}(\rho)^*) = \mathrm{im}(\rho)^* \,\mid\, \rho\in(A/A')^*\right\}=
	\left\{ C\subseteq (A/A')^*\subseteq A^*\mbox{ cyclic}\right\}.\]
	So the condition on $\varphi$ is that it does not vanish on $(A/A')^*$, for every $A'\in {\mathcal F}$. We now determine which $C\subseteq A^*$ can occur as the kernels of such $\varphi$. Since $p^NA=0$, for any subgroup $C\subseteq A^*$ there will be an inclusion $A^*/C\hookrightarrow \left(\mathbb{Z}/p^N\mathbb{Z}\right)^{n-t}$ if and only if $\prank(A^*/C)\leq n-t$. 

	Now a maximal $t$ satisfying the above is determined by:
	 \[ n-t=\min\left\{ \prank(A^*/C)\,\mid\, C\subseteq A^*\mbox{ s.t. }\forall A'\in{\mathcal F}, (A/A')^*\not\subseteq C \right\}. \]
	 Using that for any finite abelian $p$-group $B$ we have $\prank(B)=  \prank(B^*)$,
	 that $A'\mapsto (A/A')^*$ and $C\mapsto (A^*/C)^*\subseteq A^{**}=A$ are mutually inverse inclusion reversing bijections between subgroups of $A$ and $A^*$, and that 
	 ${\mathcal F}$ is a family, this becomes
	  \[ n-t=\min\left\{ \prank(\wt{C})\,\mid\, \wt{C}\subseteq A, \wt{C}\not\in{\mathcal F}\right\}=\coprank({\mathcal F}), \]
	 as claimed.
 \end{proof}
 
We finally use \Cref{nthm:heightdrop} to prove \Cref{cor:bound-height-drop}, which shows that the formation
 of geometric fixed points can lower the type of a finite complex at most by the $p$-rank of the group acting. We repeat the statement for convenience 
 
\begin{cor}\label{for:bound-height-drop-restate}
	Assume $A$ is a finite abelian $p$-group and $X\in\sp_{A,(p)}^\omega$. Then type$(\Phi^A(X))\ge\mathrm{type}(\Phi^{\{0\}}(X))-\prank(A)$. 
\end{cor}

\begin{proof}
	Let $k:=\prank(A)$, and we can assume that $n:= \mathrm{type}(\Phi^{\{0\}}(X))\ge k+1$ for
	otherwise our assertion is vacuously true.
	Our assumption is that $K(n-1)_*(\Phi^{\{ 0 \}}(X))=0$; in the case $n=\infty$, i.e., $\Phi^{\{ 0 \}}(X)$ is contractible, the following argument is applied for every Morava $K$-theory. 

	Denote by $E$ a Lubin--Tate spectrum at $p$ of height $n-1$.
	Since $\mathrm{type}(\Phi^{\{0\}}(X)) > n-1$ and any map into a $K(n-1)$-local spectrum factors through the $K(n-1)$-localization, we have $E^*(\Phi^{\{0\}}(X))=0$.
	Then we have more generally $E^*(EA'_+\otimes_{A'} X)=0$ for all subgroups $A'\subseteq A$ by considering the collapsing  homotopy fixed point spectral sequence. In other words, the Borel  spectrum $F(X, \underline{E})$ is equivariantly contractible because we have
	$\pi_*\left(F(X, \underline{E})^{A'}\right)=E^{-*}(EA'_+\otimes_{A'} X)=0$. Equivalently, all of the geometric fixed points of this spectrum are contractible. 
	We conclude that \[ 0\simeq \Phi^A(F(X, \underline{E}))\simeq D(\Phi^A(X))\otimes \Phi^A(\underline{E}).\] 
	The second of these equivalences uses the finiteness of $X$ and the fact that $\Phi^A$ is a symmetric monoidal functor. We know that the chromatic
	height of $\Phi^A(\underline{E})$ is $n-k-1$ by \Cref{rem:height-drop}, \ref{it:geom-ht-drop}, so its $p$-local finite acyclics are precisely the 
	complexes of type at least $n-k$, cf.~\Cref{nlem:heightcheck}, \ref{it:hc-ker}. This means that type$(\Phi^A(X))=$type$(D(\Phi^A(X)))\ge n-k$ (also in case $n=\infty$)\footnote{Poincar\'e duality for $K(n)$-cohomology makes it clear that the types of a finite complex and its dual agree.}.\end{proof}

\section{The complexes of Arone and Lesh} \label{subsec:arone-lesh}
 
 The aim of this subsection is to use work of Arone, Dwyer, Lesh, and Mahowald to give a proof of \Cref{prop:complexes}, the statement of which we repeat for convenience:
 
\begin{thm}\label{prop:complexes-restate}(Arone, Dwyer, Lesh, Mahowald)\\
Let $p$ be a prime, $n\ge 1$ and $\Delta:=(\mathbb{Z}/p\mathbb{Z})^{\times n}$ the corresponding elementary abelian $p$-group. Then, there is a $p$-local finite $\Delta$-equivariant spectrum $F(n)\in\sp_{\Delta,(p)}^\omega$ satisfying the following conditions:
	\begin{enumerate}[label = \roman*)]
		\item The geometric fixed points $\Phi^{\Delta}(F(n))$ have type $0$.
		\item The underlying non-equivariant spectrum of $F(n)$, i.e., $\Phi^{\left\{ 0\right\}}(F(n))$, has type $n$.
	\end{enumerate}
\end{thm}

 This result is of central importance to the entire paper.
 We remark that the complexes $F(n)$ are closely related
 to the complexes constructed by Mitchell in \cite{Mit85}, which were the first examples of finite (non-equivariant) complexes of arbitrary type. The exact relation between these
 two families of complexes is worked out in \cite[Sec.~2]{Aro98}.

The construction of $F(n)$ uses equivariant homotopy theory for compact Lie groups, and we refer the reader
to \cite{MNN} for a rapid review of this. We first describe the groups involved.
Fix an integer $m\ge 1$ and let $U(m)$ denote the unitary group of rank $m$.
Embed the permutation group $\Sigma_m\subseteq U(m)$ as the subgroup of permutation matrices. Fix the (non-standard) embedding $U(m-1)\subseteq U(m)$
corresponding to the embedding $\mathbb{C}^{m-1}\subseteq \mathbb{C}^m$ as the orthogonal complement of the diagonal. Note that then $\Sigma_m\subseteq U(m-1)
\subseteq U(m)$. 

Now consider the case $m=p^n$ and embed
$\Delta\subseteq\Sigma_{p^n}$ using the regular representation of $\Delta$ on its
 underlying set.
Let ${\mathcal P}_{p^n}$ denote the (geometric realization of the) poset of
non-trivial proper partitions of a set with $p^n$ elements. The equivariant Spanier--Whitehead dual $D({\mathcal P}_{p^n}^\diamond)$ of its unreduced suspension is
 canonically an object of $\sp_{\Sigma_{p^n}}^\omega$, i.e., a finite genuine 
 $\Sigma_{p^n}$-spectrum, and we consider its twist
 $D({\mathcal P}_{p^n}^\diamond)\otimes S^{\overline{\rho}}$ by the representation
 sphere of the reduced regular representation ${\overline{\rho}}$ of $\Sigma_{p^n}$.
 We denote by $\mathrm{Ind}_{\Sigma_{p^n}}^{U(p^n-1)}\colon \sp_{\Sigma_{p^n}} \to \sp_{U(p^n-1)}$ the induction functor and by $\mathrm{Res}_{\Delta}^{U(p^n-1)}\colon \sp_{U(p^n-1)} \to \sp_{\Delta}$ the restriction functor. Note that both induction and
 restriction preserve finite spectra.
 We can thus finally define \[ F(n):=\left( \mathrm{Res}_{\Delta}^{U(p^n-1)} \left( \mathrm{Ind}_{\Sigma_{p^n}}^{U(p^n-1)} (D({\mathcal P}_{p^n}^\diamond)\otimes S^{\overline{\rho}})\right)\right)_{(p)}\in\sp_{\Delta,(p)}^\omega.\]
 
The geometric fixed points of $F(n)$ will be analyzed through the following result, which is essentially contained in \cite[\S 5]{ArL17}.
 
 \begin{prop}\label{prop:geometricinduct}
 We have \[   \Sigma^\infty\left( U(1)^{\times (p^n-1)}_+\right) \otimes \left(\Phi^{\Delta}(-)\right)\stackrel{\simeq}{\longrightarrow} \Phi^{\Delta}\circ\mathrm{Ind}_{\Sigma_{p^n}}^{U(p^n-1)}(-)\]
 as functors $\sp_{\Sigma_{p^n}}\longrightarrow\sp$.  \end{prop}
 
 \begin{proof}
We will first establish the following unstable refinement of our assertion.
Denote by $C:=C_{U(p^n-1)}(\Delta)\simeq U(1)^{p^n-1}$ the centralizer of $\Delta$ in $U(p^n-1)$, and note
that $\Delta\subseteq C$. For every $\Sigma_{p^n}$-space $X$, we define a map 
between fixed point spaces
\begin{equation}\label{eqn:al}
\varphi_X\colon C/\Delta \times X^\Delta\longrightarrow\left( U(p^n-1)\times_{\Sigma_{p^n}} X\right)^\Delta\, , \, \varphi_X( c\Delta,x):=[c,x], 
\end{equation}
which we claim is an equivalence, functorial in $X$.

It is immediate that \eqref{eqn:al} is well-defined and functorial in $X$. Its injectivity results from 
an easy computation, using that $\Delta\subseteq \Sigma_{p^n}$ is its own centralizer: If $[c,x]=[c',x']$, then there exists some $\sigma \in \Sigma_{p^n}$ such that \[(c,x)=(c'\sigma^{-1}, \sigma x')\in U(p^n-1)\times X.\] This implies $c\sigma=c'\in C$ and hence $c\sigma d \sigma^{-1}  c^{-1} = d$ for all $d\in \Delta$. From this we see that $\sigma\in \Sigma_{p^n}\cap C=\Delta$ and hence $c = c' \sigma^{-1} $ for some $\sigma^{-1}\in \Delta$ and $x=\sigma x'=x'$, as desired.

To see the surjectivity of \eqref{eqn:al}, recall the general computation of the fixed points of an induced space
\[ \left( U(p^n-1)\times_{\Sigma_{p^n}} X\right)^\Delta = \left\{ [u,x] \,\mid\, u\in U(p^n-1)
\mbox{ such that } u^{-1}\Delta u\subseteq \Sigma_{p^n}\mbox{ and }x\in X^{u^{-1}\Delta u} \right\}.\]
Denoting by $N:=N_{U(p^n-1)}(\Delta; \Sigma_{p^n}):=\left\{ u\in U(p^n-1)\,\mid\, u^{-1}\Delta u\subseteq \Sigma_{p^n}\right\}$, Arone and Lesh show in \cite[\S 5]{ArL17}, that the obvious inclusion $C\cdot\Sigma_{p^n}\subseteq N$ is in fact an equality. Now, given $[u,x]\in \left( U(p^n-1)\times_{\Sigma_{p^n}} X\right)^\Delta$, we can write $u=c\sigma$ with $c\in C$, $\sigma\in\Sigma_{p^n}$ and $x\in X^{u^{-1}\Delta u}=X^{\sigma^{-1}\Delta \sigma}$. We conclude that $x=\sigma^{-1}y$ for some $y\in X^\Delta$, and then compute
\[ \varphi_X(c\Delta,y)=[c,y]=[c\sigma,\sigma^{-1}y]=[u,x],\]
hence $\varphi_X$ is indeed surjective.

To identify $C/\Delta$, we observe that $\Delta\to C\simeq U(1)^{\times(p^n-1)}$ maps to each of the $p^n-1$ components by each of the non-trivial irreducible representations of $\Delta$. The quotient of each of these actions is \[U(1)/\Delta = U(1)/\mathrm{Im}(\Delta) = U(1)/(\mathbb{Z}/p\mathbb{Z})\simeq U(1).\] We can thus identify the quotient $U(1)^{\times (p^n-1)}/\Delta^{\times (p^n-1)}\simeq U(1)^{\times (p^n-1)}$. To identify the  quotient $C/\Delta\simeq U(1)^{\times (p^n-1)}/\Delta$ by the diagonal copy of $\Delta$ we consider the induced quotient fiber sequence:
\[ \Delta^{\times (p^n-1)}/\Delta \to U(1)^{\times(p^n-1)}/\Delta \to U(1)^{\times (p^n-1)}/\Delta^{\times (p^n-1)}(\simeq U(1)^{\times (p^n-1)}). \] 
This forces $U(1)^{\times(p^n-1)}/\Delta$ to be a $K(\mathbb{Z}^{\times (p^n-1)},1)\simeq U(1)^{\times (p^n-1)}$. 

In order to pass to the stable setting, we observe that the above equivalence (\ref{eqn:al}) has an obvious analogue for pointed $\Sigma_{p^n}$-spaces which implies a natural stable equivalence for pointed $\Sigma_{p^n}$-spaces $X$ of the form
\begin{equation}\label{eq:al2}
 \Sigma^\infty\left( C/\Delta_+\right) \otimes \Sigma^\infty(X^\Delta)\xrightarrow{\simeq}\Sigma^\infty\left( \left( U(p^n-1)_+\wedge_{\Sigma_{p^n}} X\right)^\Delta\right) \simeq \Phi^{\Delta} \left(\Sigma^\infty (U(p^n-1)_+)\otimes_{\Sigma_{p^n}} \Sigma^\infty (X)\right).
\end{equation}
Now by the equivariant Freudenthal suspension theorem \cite[\S IX]{May96} and a standard induction argument over cells, every finite $\Sigma_{p^n}$-spectrum $Y$ is equivalent to $S^{-V}\otimes \Sigma^\infty(X)$ where $\Sigma^\infty(X)$ is the equivariant suspension spectrum of a finite pointed $\Sigma_{p^n}$-space $X$ and $S^{-V}$ is the equivariant Spanier--Whitehead dual of a representation sphere of $\Sigma_{p^n}$. Applying the equivalence of \eqref{eq:al2} to such an $X$ we see that:
\[ \Sigma^\infty(C/\Delta_+)\otimes\Phi^\Delta(Y)\simeq S^{-V^{\Delta}}\otimes \Sigma^\infty (C/\Delta_+) \otimes \Sigma^\infty(X^\Delta)\]
\[\xrightarrow{\simeq} S^{-V^{\Delta}}\otimes \Phi^{\Delta} \left(\Sigma^\infty (U(p^n-1)_+)\otimes_{\Sigma_{p^n}} \Sigma^\infty(X)\right)\simeq \Phi^\Delta(\Ind_{\Sigma_{p^n}}^{U(p^n-1)}(Y)).\]
Finally, every $\Sigma_{p^n}$-spectrum is a filtered colimit of finite $\Sigma_{p^n}$-spectra and hence we obtain the desired equivalence for any $\Sigma_{p^n}$-spectrum, taking into account that $C/\Delta\simeq U(1)^{\times(p^n-1)}$.
 \end{proof}

\begin{proof}[Proof of \Cref{prop:complexes}]
Recall that we defined  \[ F(n):=\left( \mathrm{Res}_{\Delta}^{U(p^n-1)}\left( \mathrm{Ind}_{\Sigma_{p^n}}^{U(p^n-1)} (D({\mathcal P}_{p^n}^\diamond)\otimes S^{\overline{\rho}})\right)\right)_{(p)}\in\sp_{\Delta,(p)}^\omega,\]
	and now need to check properties \ref{it:adlm-0} and \ref{it:adlm-n} for $F(n)$. 

	By \Cref{prop:geometricinduct}, we see that
	\[ \Phi^\Delta(F(n))\simeq\ U(1)_+^{\times(p^n-1)}\otimes\Phi^\Delta (D({\mathcal P}_{p^n}^\diamond)\otimes S^{\overline{\rho}})_{(p)}. \]
	Observe that $\Phi^\Delta(-)$ is symmetric monoidal, and in particular commutes
	over Spanier--Whitehead duals.
Since ${\mathcal P}_{p^n}^\Delta$ is the Tits-building of Gl$_n(\mathbb{F}_p)$
(cf.~\cite[Lem.~10.1]{ADL16}) which is a wedge of $p^{\frac{n(n-1)}{2}}$ spheres of dimension $n-2$ if $n\ge 2$ (for $n=1$, the Tits-building is empty and its unreduced suspension is $S^0$), and $\Phi^\Delta(S^{\overline{\rho}})\simeq S^0$, we see that $\Phi^\Delta(F(n))$ has type zero.

Showing property \ref{it:adlm-n} requires much harder previous work of Arone and Mahowald. Using that $\Phi^{\{0\}}\circ\mathrm{Ind}_{\Sigma_{p^n}}^{U(p^n-1)}\simeq
U(p^n-1)_+\otimes_{h\Sigma_{p^n}}	(-)$, we see that the underlying spectrum of $F(n)$
is given as
\[ \Phi^{\{0\}}(F(n))\simeq U(p^n-1)_+\otimes_{h\Sigma_{p^n}}(D({\mathcal P}_{p^n}^\diamond)\otimes S^{\overline{\rho}})_{(p)}.\]
This is easily recognized to be, up to a shift, one of the spectra figuring in \cite{Aro98} (where Arone uses $K_n$ to denote the suspension of $\cP_n^{\diamond}$).  In \cite[Thm.~0.4]{Aro98}, building on work of Arone and Mahowald~\cite{ArM99}, it is shown that $H^*(\Phi^{\{ 0 \}}(F(n)),\mathbb{F}_p)$ is finitely generated, free and non-zero over the subalgebra $\mathcal{A}_{n-1}$ of the mod $p$ Steenrod algebra.

Using this to compute connective Morava $K$-theories through the Adams spectral sequence,
it easily follows that the type of $\Phi^{\{ 0 \}}(F(n))$ is at least $n$ (see the proof of \cite[Thm.~4.8]{Mit85} for details of this
argument). Since we have already seen that the $\Delta$-geometric fixed points of $F(n)$ have type zero, we know that the type of $\Phi^{\{ 0 \}}(F(n))$ can be at most $n$ by \Cref{cor:bound-height-drop}.
So it must be exactly $n$, which concludes the proof. 
\end{proof}

\begin{rem}\label{rem:akhil}
The case $n=1$ of \Cref{prop:complexes} admits a nice direct proof, which we learned from Akhil Mathew: Let $F(1)'\in\sp_{\mathbb{Z}/p\mathbb{Z}}^\omega$ denote the cofiber of the transfer
	map $t\colon S^0\to \mathbb{Z}/p\mathbb{Z}_+\to  S^0$. On geometric fixed points, this map is zero because it factors through $\Phi^{\mathbb{Z}/p\mathbb{Z}}(\mathbb{Z}/p\mathbb{Z}_+)\simeq 0$, hence $\Phi^{\mathbb{Z}/p\mathbb{Z}}(F(1)')\simeq S^0\vee S^1$ has type zero. The non-equivariant map underlying $t$ is simply multiplication by $p$, hence $\Phi^{\{ 0 \}}(F(1)')$ is equivalent to $S^0/p$, and thus has type one.
\end{rem}

\bibliographystyle{alpha}
\bibliography{balmer-sanders-niko}
\end{document}